\newtheorem{theorem}{Theorem}[section]
\newtheorem{corollary}[theorem]{Corollary}
\newtheorem{proposition}[theorem]{Proposition}
\newtheorem{remark}[theorem]{Remark}
\newtheorem{definition}[theorem]{Definition}
\newtheorem{example}[theorem]{Example}
\newcommand{\uno}{\mathbf{1}}
\newcommand{\ld}{\leq_\diamond}
\def\RR{{\mathbb{R}}}
\def\CC{{\mathbb{C}}}
\def\soc{{\rm Soc}}
\begin{document}
\title[Maps preserving the diamond partial order]{Maps preserving the diamond partial order}

\author{M. Burgos}

\address{Campus de Jerez, Facultad de Ciencias Sociales y de la Comunicaci\'{o}n Av. de la Universidad s/n, 11405 Jerez, C\'{a}diz, Spain}

\email{maria.burgos@uca.es}

\author{A. C. M\'{a}rquez-Garc\'{i}a}

\address{ Departamento \'{A}lgebra y An\'{a}lisis Matem\'{a}tico,
Universidad de Almer\'{i}a, 04120 Almer\'{i}a, Spain}

\email{acmarquez@ual.es}

\author{A. Morales-Campoy}

\address{Departamento de \'{A}lgebra y An\'{a}lisis Matem\'{a}tico,
Universidad de Almer\'{i}a, 04120 Almer\'{i}a, Spain}

\email{amorales@ual.es}

\thanks{Authors partially supported by the Spanish Ministry of Economy and Competitiveness project no. MTM2014-58984-P and Junta de Andaluc\'{\i}a grants FQM375, FQM194. The second author is also supported by a Plan Propio de Investigaci\'{o}n grant from University of Almer\'{i}a. The authors thank A. Peralta for useful comments and hospitality during their visit to the Departamento de An\'{a}lisis Matem\'{a}tico de la Universidad de Granada.}

\begin{abstract} The present paper is devoted to the study of the diamond partial order in general $C^*$-algebras and the description of linear maps preserving this partial order.
\end{abstract}

\keywords{Diamond partial order, Linear preserver, C*-algebra, generalized inverse, Jordan homomorphism.\\ AMS classification: 47B48 (primary), 47B49,47B60, 15A09 (secondary)}

\date{}

\maketitle
 \thispagestyle{empty}

\section{Introduction}
Let $A$ be a (complex) Banach algebra. An element $a$ in $A$
is (\emph{von Neumann}) \emph{regular} if it has a
generalized inverse, that is, if there exists $b$ in $A$ such that
$a=aba$ ($b$ is an \emph{inner inverse} of $a$) and $b=bab$ ($b$ is
an \emph{outer inverse} of $a$). The generalized inverse of a regular element $a$ is not unique. Observe also that the first equality
$a=aba$ is a necessary and sufficient condition for $a$ to be
regular, and that, if $a$ has generalized inverse $b$, then $p=ab$
and $q=ba$ are idempotents in $A$ with $aA=pA$ and
$Aa=Aq$. We denote by $A^\bullet$ the set of idempotent elements in $A$ and by $A^\wedge$ the set of all regular elements of $A$.

The unique generalized inverse of $a$ that commutes with $a$ is called the \emph{group inverse} of $a$, whenever it exists. In this case $a$ is said to be \emph{group invertible} and its group inverse is denoted by $a^\sharp$. The set of all group invertible elements of $A$  is denoted by $A^\sharp$.

\smallskip

For an element $a$ in $A$ let us consider the left and right
multiplication operators $L_a :x\mapsto ax$ and $R_a:x\mapsto xa$,
respectively. If $a$ is regular, then so are $L_a$ and $R_a$, and
thus their ranges $aA=L_a(A)$ and $Aa=R_a(A)$ are both
closed.
\smallskip

Regular elements in unital C$^*$-algebras have been
studied by Harte and Mbekhta. The main result in \cite{HarMb92} states  that an element $a$ in a
C$^*$-algebra $A$ is regular if and only if $aA$ is closed.

Given $a$ and $b$ in $A$, $b$ is said to be a
\emph{Moore-Penrose inverse} of $a$ if $b$ is a generalized inverse
of $a$ and the associated idempotents $ab$ and $ba$ are
selfadjoint (i.e., projections). It is known that every regular element $a$ in $A$ has
a unique Moore-Penrose inverse that will be denoted by $a^\dag$ (\cite[Theorem 6]{HarMb92}).
Therefore, the Moore-Penrose inverse of a regular element $a\in A$ is the unique element that satisfy the following equations:
$$aa^\dagger a=a,\quad a^\dagger a a^\dagger=a^\dagger, \quad (aa^\dagger)^*=aa^\dagger, \quad (a^\dagger a)^*=a^\dagger a.$$
In what follows let us denote by $\textrm{Proj}(A)$ the set of projections of $A$.

Generalized inverses are used in the study of partial orders on matrices, operator algebras and abstract rings.  Let $M_n(\CC)$ be the algebra of all $n\times n$ complex matrices. On  $M_n(\CC)$ there are many classical partial orders (see \cite{BakHa90} , \cite{Drazin78}, \cite{Hart80}, \cite{HarSty86}, \cite{Mi87}, \cite{Mi91}, \cite{MiBhiMa}).
The \emph{star partial order} on $M_n(\CC)$ was introduced by Drazin  in \cite{Drazin78}, as follows:
$$ A\leq_{*}B \qquad \mbox{if and only if}\qquad A^*A=A^*B \,\,  \mbox{and } \,\, AA^*=BA^*,$$
where as usual $A^*$ denotes the conjugate transpose of $A$. He showed that $ A\leq_{*}B$ if and only if $A^\dag A=A^\dag B $ and $ AA^\dag=BA^\dag .$
Baksalary and Mitra introduced in \cite{BakMi91} the \emph{left-star} and  \emph{right-star} partial order on $M_n(\CC)$ , as
$$ A*\leq B \qquad \mbox{if and only if}\qquad A^*A=A^*B \, \, \mbox{and } \,\,  \textrm{Im} A\subseteq \textrm{Im}B,$$
and
$$ A\leq * B \qquad \mbox{if and only if}\qquad A A^*=B A^* \,\,  \mbox{and }\,\,  \textrm{Im}A^*\subseteq \textrm{Im}B^*,$$ respectively.
Besides, $ A\leq_{*}B$ if and only if $ A*\leq B$ and $ A\leq * B$.

Hartwig \cite{Hart80} introduced the \emph{rank substractivity order}, usually known as the \emph{minus partial order} on $M_n(\CC)$:
$$ A\leq^ {-} B \qquad \mbox{if and only if}\qquad \textrm{rank}(B-A)= \textrm{rank}(B)- \textrm{rank}(A).$$ It is proved that
$$ A\leq^{-}B \qquad \mbox{if and only if}\qquad A^-A=A^-B \,\,  \mbox{and }\, \, AA^-=BA^-,$$ where $A^-$ denotes an inner inverse of $A$.
Later, Mitra used in \cite{Mi87} the group inverse of a matrix to define the \emph{sharp order} on group invertible matrices:
$$ A\leq_{\sharp}B \qquad \mbox{if and only if}\qquad A^\sharp A=A^\sharp B \,\,  \mbox{and }\, \, AA^\sharp=BA^\sharp.$$

Let $H$ be an infinite-dimensional complex Hilbert space, and $B(H)$ the C$^*$-algebra af all bounded linear operators on $H$. \v{S}emrl  extended in \cite{Semrl10} the minus partial order from  $M_n(\CC)$ to $B(H)$ finding an appropriate equivalent definition of the minus partial order on  $M_n(\CC)$ which does not involve inner inverses:  for $A,B\in B(H)$, $A\preceq B$ if and only if there exists idempotent operators $P,Q\in B(H)$ such that
$$R(P)=\overline{R(A)},\quad N(A)=N(Q), \quad PA=PB, \quad AQ=BQ.$$
\v{S}emrl proved that the relation  ``$\preceq $'' is a partial order in $B(H)$ extending the minus partial order of matrices. Recently Djordjevi\'c, Raki\'c and Marovt (\cite{DjoRaMa13}) generalized \v{S}emrl's definition to the environment of Rickart rings and generalized some well known results.


One of the most active and fertile research area in Linear Algebra, Operator Theory and Functional Analysis, are the ``linear preserver problems''. These problems concern the characterization of linear maps between algebras that, roughly speaking,  leave certain functions, subsets, relations, properties... invariant. The goal is to find the form of these maps. (See for instance \cite{GuLiSe, Molnar} and the references therein.)

 In   \cite{Semrl10},  \v{S}emrl  studied bijective maps preserving the minus partial order. For an infinite-dimensional complex Hilbert space $H$, a mapping  $\phi:B(H)\to B(H)$  preserves the minus partial order if  $ A\preceq B$ implies that  $ \phi(A)\preceq\phi(B)$.
The map  $\phi:B(H)\to B(H)$  preserves the minus order in both directions whenever  $ A\preceq B$ if and only if $ \phi(A)\preceq\phi(B)$. He proved that a bijective map  $\phi:B(H)\to B(H)$  preserving the minus partial order in both directions is either of the form $\phi(A)=TAS$ or $\phi(A)=TA^*S$, for some invertible operators $T$ and $S$ (both linear in the first case and both conjugate linear in the second one).

In \cite{Gut07} Guterman studied additive maps preserving the star, left-star and right-star orders between real and complex matrix algebras. An additive map $\phi:M_n(\CC)\to M_n(\CC)$ preserves the star partial order if $ A\leq_{*}B$ implies that  $ \phi(A)\leq_{*}\phi(B)$. Additive maps preserving the left-star and right-star partial order are defined in a similar way. Recently, the authors of \cite{DoGuma14} bring some results from \cite{Gut07} concerning left and right star partial orders to the infinite-dimensional case, following some techniques from  \cite{Semrl10}.

Linear maps preserving the sharp partial order and the star partial order in semisimple Banach algebras and C$^*$-algebras are studied in  \cite{BuMaPa15}. It is introduced a new relation $(R1)$ which extends the sharp relation to the full algebra:
$$(R1)\qquad a\leq_s b\quad\mbox{ if and only if there exists}\, p\in A^\bullet \, \mbox{ such that }\, a=pb=bp.$$
It is shown that a bijective linear map preserving the sharp partial order (respectively, the relation $(R1)$) from a unital semisimple Banach algebra with essential socle into a Banach algebra is a Jordan isomorphism multiplied by a invertible central element (\cite[Theorems 2.7, 2.16]{BuMaPa15}). The authors also consider the relation:
$$ (R2) \qquad a\leq b \quad \mbox{if and only if}\quad a=pb=bq\, \mbox{ for some }\, p,q\in \textrm{Proj}(A),$$
which is equivalent to the star partial order for Rickart C$^*$-algebras. Every bijective linear map preserving the relation $(R2)$, from a unital $C^*$-algebra with large socle into a C$^*$-algebra is a  Jordan *-homomorphism multiplied by an invertible element (\cite[Corollary 3.7]{BuMaPa15})
If $A$ is a real rank zero $C^*$-algebra, $B$ is a C$^*$-algebra and $T:A\to B$ is a bounded linear map preserving the relation $(R2)$, then $T$ is a linear map preserving orthogonality (\cite[Theorem 3.10]{BuMaPa15}), and thus it is an appropriate multiple of a Jordan $^*$-homomorphism (\cite[Theorem 17 and Corollary 18]{Orth08}).

Motivated by the definition of  Djordjevi\'c, Raki\'c and Marovt (\cite{DjoRaMa13})  of the minus partial order in Rickart rings, the authors of the present paper, consider in \cite{BuMaMo-pr15} the minus partial order in a unital ring $A$: for an element $a\in A$, let  $\textrm{ann}_r (a)= \{x\in A\colon ax=0\}$ and $\textrm{ann}_l (a)=  \{x\in A\colon xa=0\}$, the right and left annihilator of $a$, respectively.
We say that $a\leq^- b$ if there exist $p,q\in A^\bullet $ such that  $\textrm{ann}_l(a)=\textrm{ann}_l(p)$, $\textrm{ann}_r(a)=\textrm{ann}_r(q)$, $pa=pb$ and $aq=bq$.
It is shown that this is a partial order when restricted to the set of all regular elements in a semiprime ring. Several well known results for matrices and bounded linear operators on Banach spaces are also obtained. Moreover,  when $A$ and $B$ are unital semisimple Banach algebras with essential socle, it is proved that every bijective linear mapping $\Phi:A \to B$ such that $\Phi(A^\wedge)= B^\wedge$ and $a\leq^-b \Leftrightarrow \Phi(a)\leq^- \Phi(b)$ for every $a,b\in A^\wedge$, is a Jordan isomorphism multiplied by an invertible element.

The paper is organized as follows. In Section 2 we recall the definition of the diamond partial order introduced by Lebtahi, Patr\'{i}cio and Thome in \cite{LebPaTh13} for regular *-rings. We show that this is a partial order in every $C^*$-algebra and describe some distinguished elements with respect to this relation such as the maximal and minimal elements (Proposition \ref{invprdiamond} and Proposition \ref{minimals}, respectively). We also characterize projections and multiples of isometries and co-isometries by means of the diamond partial order. These results will be applied in Section 3 where we study linear maps between $C^*$-algebras preserving the diamond partial order. Every Jordan $^*$-homomorphism preserves the diamond partial order on regular elements (Proposition \ref{jordiam}). In Theorem \ref{main1} we prove that every surjective linear map $T:A\to B$ between  unital $C^*$-algebras with essential socle ($B$ is assumed to be prime), that preserves the diamond partial order in both directions, is an appropriate multiple of a  Jordan $^*$-homomorphism. We also prove  in Theorem \ref{d-rro} that, if $A$ is a real rank zero $C^*$-algebra, $B$ is a $C^*$-algebra and $T:A\to B$ is a bounded linear map preserving the diamond partial order, then $T$ is a   Jordan $^*$-homomorphism (respectiveley, a  Jordan $^*$-homomorphism multiplied by a unitary element) whenever $T(\textbf{1})\in \textrm{Proj}(B)$  (respectively, $T(A)\cap B^{-1}$ and $T(\textbf{1})$ is a partial isometry).
\section{Diamond partial order}
In \cite{LebPaTh13}, Lebtahi, Patr\'{i}cio and Thome introduce the diamond partial order on a $^*$-regular ring, extending a partial order defined in the matrix setting by Baksalary and Hauke in \cite{BakHa90}. Although it can be considered in a more general setting, we will focus on the framework of  C$^*$-algebras.

\begin{definition}\label{diamdef}Let $A$ be a unital C$^*$-algebra and $a,b\in A$. We say that $a\ld b$ if and only if $aA\subset bA$, $Aa\subset Ab$ and $aa^*a=ab^*a$.
\end{definition}
Let $A$ be a unital C$^*$-algebra and $a,b\in A$. We say that $a\leq_{sp} b$ if  $aA\subset bA$ and $Aa\subset Ab$. This definition is analogous to that of space pre-order on complex matrices introduced by Mitra in \cite{Mi91}. Therefore,
$$ a\ld b \quad \mbox{if and only if}\quad a\leq_{sp} b \, \mbox{ and }\, aa^*a=ab^*a.$$
The following proposition collects some algebraic properties of the relation ''$\ld$ '' that will we need in the sequel. It is implicitly proved in \cite{LebPaTh13}.
Recall that $a\leq^- b$ if there exist $p,q\in A^\bullet $ such that  $\textrm{ann}_l(a)=\textrm{ann}_l(p)$, $\textrm{ann}_r(a)=\textrm{ann}_r(q)$, $pa=pb$ and $aq=bq$. This relation defines a partial order when restricted to the set of all regular elements, and  given $a, b\in A^\wedge$, $a\leq^- b$ if and only if there exists an inner inverse, $b^-$, of $b$ such that $a=ab^- b=bb^- a=ab^-a.$ (Compare with \cite[Proposition 2.1, Corollary 2.4]{BuMaMo-pr15}).
\begin{proposition}\label{diamprop}Let $A$ be a unital C$^*$-algebra.
\begin{enumerate}
\item If $a\in A^\wedge$ and $b\in A$, $a\ld b$ if and only if $a\leq_{sp} b$ and $a^\dag b a^\dag=a^\dag$.

\item If $a\in A^\wedge$ and $b\in A$, then $a\ld b$ whenever $a\leq_{*} b$.

\item Given $a,b\in A^\wedge$, $a\ld b$ if and only if $a^\dag\leq^{-} b^\dag$.

\end{enumerate}
\end{proposition}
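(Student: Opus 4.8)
The plan is to prove (1) as the main algebraic reduction and then read (2) and (3) off from it. Throughout write $p=aa^\dag$ and $q=a^\dag a$ for the two selfadjoint idempotents attached to a regular $a$; they satisfy $pa=aq=a$, $a^\dag p=qa^\dag=a^\dag$, $a^*p=qa^*=a^*$, together with the involutivity $(a^\dag)^\dag=a$. Since $a\ld b$ means by definition $a\leq_{sp}b$ together with $aa^*a=ab^*a$, for (1) it is enough to show that for regular $a$ the equation $aa^*a=ab^*a$ is equivalent to $a^\dag ba^\dag=a^\dag$, independently of the space condition. Multiplying $aa^*a=ab^*a$ on both sides by $a^\dag$ and using $qa^*p=a^*$ collapses the left side to $a^*$ and the right side to $qb^*p$, so the equation becomes $a^*=qb^*p$, i.e. $a=pbq$ after taking adjoints; conversely $a=pbq$ gives $aa^*a=ab^*a$ back by substituting $a^*=qb^*p$ and using $pa=aq=a$. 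Finally $a=pbq$ is equivalent to $a^\dag ba^\dag=a^\dag$, by multiplying the former on both sides by $a^\dag$ (and using $a^\dag p=qa^\dag=a^\dag$), and the latter on both sides by $a$. This chain is exactly (1).

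For (2), write $a\leq_{*}b$ as $a^*a=a^*b$ and $aa^*=ba^*$. Taking adjoints in the first identity gives $a^*a=b^*a$, whence $ab^*a=a(b^*a)=a(a^*a)=aa^*a$, the diamond equation. For the space condition, multiply $aa^*=ba^*$ on the right by $(a^\dag)^*$ and use $a^*(a^\dag)^*=q$ to obtain $a=aq=bq\in bA$, so $aA\subset bA$; symmetrically, multiplying $a^*a=a^*b$ on the left by $(a^\dag)^*$ and using $(a^\dag)^*a^*=p$ gives $a=pa=pb$, so $Aa\subset Ab$. Hence $a\leq_{sp}b$, and with the diamond equation this is $a\ld b$.

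Part (3) is the substantial one, and I would base both implications on (1). The key preliminary is the symmetry $a\leq_{sp}b\iff a^\dag\leq_{sp}b^\dag$: writing $p_a,q_a,p_b,q_b$ for the four projections, one has $aA=p_aA$, $Aa=Aq_a$, $a^\dag A=q_aA$, $Aa^\dag=Ap_a$, and since these idempotents are selfadjoint each of the two space orders is equivalent to the single pair of projection inequalities $p_a\leq p_b$ and $q_a\leq q_b$. From $p_a\leq p_b$ and $q_a\leq q_b$, left/right multiplication by $a^\dag$ extracts the two workhorse identities $a^\dag bb^\dag=a^\dag$ and $b^\dag ba^\dag=a^\dag$. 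For the forward implication, assuming $a\ld b$ yields these two identities (from $a\leq_{sp}b$) and $a^\dag ba^\dag=a^\dag$ (from (1)); taking $c=b$, which is an inner inverse of $b^\dag$ since $b^\dag bb^\dag=b^\dag$, the three defining equations of $a^\dag\leq^{-} b^\dag$, namely $a^\dag=a^\dag cb^\dag$, $a^\dag=b^\dag ca^\dag$, $a^\dag=a^\dag ca^\dag$, read exactly $a^\dag bb^\dag=a^\dag$, $b^\dag ba^\dag=a^\dag$, $a^\dag ba^\dag=a^\dag$, all already proved. Hence $a^\dag\leq^{-} b^\dag$.

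For the converse, let an inner inverse $c$ of $b^\dag$ witness $a^\dag\leq^{-} b^\dag$, so $a^\dag=a^\dag cb^\dag=b^\dag ca^\dag=a^\dag ca^\dag$. The equalities $a^\dag=b^\dag ca^\dag$ and $a^\dag=a^\dag cb^\dag$ place $a^\dag\in b^\dag A$ and $a^\dag\in Ab^\dag$, i.e. $a^\dag\leq_{sp}b^\dag$, so by the preliminary $a\leq_{sp}b$, and in particular $b^\dag ba^\dag=a^\dag$. It remains to recover the diamond equation, and here the arbitrariness of $c$ must be neutralised. Right-multiplying $a^\dag=a^\dag cb^\dag$ by $b$ gives $a^\dag b=a^\dag cb^\dag b$, and therefore
\[
a^\dag ba^\dag=a^\dag c\,b^\dag b\,a^\dag=a^\dag c\,(b^\dag ba^\dag)=a^\dag ca^\dag=a^\dag ,
\]
using $b^\dag ba^\dag=a^\dag$ and then $a^\dag ca^\dag=a^\dag$. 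By (1) this gives $a\ld b$. I expect this last computation to be the only delicate point: in the forward direction the natural inner inverse $c=b$ is at hand, whereas in the converse $c$ is an unspecified inner inverse of $b^\dag$ that cannot simply be replaced by $b$, and it is precisely the space-order consequence $b^\dag ba^\dag=a^\dag$ that absorbs $c$ and closes the argument; everything else is bookkeeping with the selfadjoint idempotents $p,q$ and the identity $(a^\dag)^\dag=a$.
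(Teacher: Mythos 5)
Your proof is correct; I checked each step. Part (1)'s chain $aa^*a=ab^*a \iff a=pbq \iff a^\dag ba^\dag=a^\dag$ is valid (the multiplications by $a^\dag$ and by $a$ are mutually inverse reductions thanks to $pa=aq=a$ and $a^\dag p=qa^\dag=a^\dag$), part (2) is a routine verification, and in part (3) the two delicate points both hold: the symmetry $a\leq_{sp}b\iff a^\dag\leq_{sp}b^\dag$ via the projection inequalities $p_a\leq p_b$, $q_a\leq q_b$ is sound because $aA=p_aA$, $a^\dag A=q_aA$, etc., and your final computation $a^\dag ba^\dag=a^\dag c(b^\dag ba^\dag)=a^\dag ca^\dag=a^\dag$ correctly neutralises the unspecified inner inverse $c$ using the space-order consequence $b^\dag ba^\dag=a^\dag$.

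The comparison with the paper is somewhat asymmetric: the paper gives no argument at all for this proposition, merely citing \cite{LebPaTh13} (Theorem 1, Proposition 2(a) and Theorem 2 there, proved in the setting of rings with involution), and remarking that the statement is ``implicitly proved'' in that reference. So your proposal is not a variant of the paper's proof but a replacement for an omitted one. What your route buys is self-containedness and transparency about what is actually used: nothing beyond the existence of the Moore--Penrose inverse for regular elements (the one genuinely C$^*$-theoretic input, from \cite{HarMb92}) and purely algebraic manipulation of the selfadjoint idempotents $aa^\dag$, $a^\dag a$; in particular your argument works verbatim in any $*$-ring in which the relevant elements admit Moore--Penrose inverses, which is essentially the generality of the cited source. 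It also isolates a reusable intermediate fact not made explicit in the paper, namely that $\leq_{sp}$ is invariant under $a\mapsto a^\dag$, which is the structural reason behind the equivalence in (3).
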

\begin{proof}
\emph{(1)} See \cite[Theorem 1]{LebPaTh13}.

\emph{(2)} See  \cite[Proposition 2 (a)]{LebPaTh13}.

\emph{(3)} See  \cite[Theorem 2]{LebPaTh13}.
\end{proof}
It follows from \emph{(3)} and the fact that ``$\leq^{-}$'' is a partial order on the set of all regular elements (see \cite[Corollary 2.4]{BuMaMo-pr15}) that the relation ``$\ld$ '' is a partial order on $A^\wedge$. Besides, we can state the following:
\begin{proposition}\label{diam-part} Let $A$ be a unital  C$^*$-algebra. The relation ``$\ld$ '' is a partial order on $A$.
\end{proposition}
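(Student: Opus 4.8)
The plan is to show that the relation $\ld$ is reflexive, antisymmetric, and transitive on all of $A$. Reflexivity is immediate: for any $a\in A$ we trivially have $aA\subset aA$, $Aa\subset Aa$, and $aa^*a=aa^*a$, so $a\ld a$. The substance of the proof lies in antisymmetry and transitivity, and the natural strategy is to reduce the general case (where $a,b$ need not be regular) to the setting of Proposition \ref{diamprop}(3), which handles regular elements via the already-established partial order $\leq^-$ on $A^\wedge$.

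\smallskip

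The first genuine step is to observe that $a\ld b$ forces $a$ to inherit regularity from the space pre-order condition. Indeed, $a\ld b$ gives $a\leq_{sp} b$, so $aA\subset bA$ and $Aa\subset Ab$; combined with $aa^*a=ab^*a$ one should extract that $aA$ is closed, whence by the Harte--Mbekhta theorem (an element of a C$^*$-algebra is regular iff its right ideal is closed) $a$ is regular. Concretely, I would argue that the condition $aa^*a=ab^*a$ together with $aA\subset bA$ pins down enough structure to produce a generalized inverse of $a$ built from $a^*$ and the idempotents associated to the inclusions; once $a\in A^\wedge$, part (1) of Proposition \ref{diamprop} rewrites $a\ld b$ as $a\leq_{sp}b$ together with $a^\dagger b a^\dagger=a^\dagger$. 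The key payoff is that whenever $a\ld b$ holds, the \emph{smaller} element $a$ is automatically regular, even if $b$ is not.

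\smallskip

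For antisymmetry, suppose $a\ld b$ and $b\ld a$. By the previous paragraph both $a$ and $b$ are regular, so we are inside $A^\wedge$, where $\ld$ is already known to be a partial order: by the remark following Proposition \ref{diamprop}, part (3) identifies $a\ld b$ with $a^\dagger\leq^- b^\dagger$, and since $\leq^-$ is antisymmetric on regular elements (\cite[Corollary 2.4]{BuMaMo-pr15}), we get $a^\dagger=b^\dagger$, hence $a=b$ by uniqueness of the Moore--Penrose inverse. For transitivity, suppose $a\ld b$ and $b\ld c$. Again $a$ and $b$ are regular; the inclusions $aA\subset bA\subset cA$ and $Aa\subset Ab\subset Ac$ chain together to give $a\leq_{sp}c$, so it remains only to verify $aa^*a=ac^*a$. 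Here I would work through $a\leq_{sp} b$ to write $a=bx$ and $a=yb$ for suitable $x,y\in A$, and use $bb^*b=bc^*b$ (the diamond condition for $b\ld c$) to transport the middle factor, reducing $ac^*a$ to $ab^*a$ and then to $aa^*a$ via the diamond condition for $a\ld b$.

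\smallskip

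I expect the main obstacle to be the transitivity computation $aa^*a=ac^*a$, because the diamond defining equation is genuinely nonlinear (cubic in the element) and, unlike the ideal inclusions, does not compose formally. The difficulty is that $b\ld c$ controls $bb^*b$, not $bc^*b$ sandwiched between arbitrary factors, so one must carefully use the factorizations coming from $a\leq_{sp}b$ to insert $b$'s where $a$'s appear and then invoke the equation for $b$. The cleanest route is likely to pass through part (3) entirely: show that $a\ld b$ and $b\ld c$ imply $a^\dagger\leq^- b^\dagger$ and $b^\dagger\leq^- c^\dagger$, conclude $a^\dagger\leq^- c^\dagger$ by transitivity of $\leq^-$, and translate back via part (3) to $a\ld c$ — but this demands knowing $c$ is regular, which may fail, so the $\leq_{sp}$-plus-equation argument above is the robust fallback that avoids assuming regularity of the top element.
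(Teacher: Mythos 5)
Your proposal breaks down at its central reduction step. The claim that $a\ld b$ forces $a$ to be regular is false, and reflexivity itself is the counterexample: every $a\in A$ satisfies $a\ld a$, so if the claim were true, every element of every unital C$^*$-algebra would be regular --- which fails whenever $A$ contains an element with non-closed range ideal (e.g.\ any function in $C[0,1]$ vanishing at a single point, or a compact operator with infinite-dimensional non-closed range). The Harte--Mbekhta theorem cannot be invoked here: the inclusion $aA\subset bA$ together with $aa^*a=ab^*a$ gives no control on closedness of $aA$, since $bA$ itself need not be closed, and even a closed $bA$ can contain non-closed right ideals. Because your antisymmetry argument consists entirely of this reduction to $A^\wedge$ followed by Proposition \ref{diamprop}(3), antisymmetry is left unproved. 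The repair is a direct computation needing no regularity at all, which is what the paper does: from $a\ld b$ and $b\ld a$ write $a=xb=by$; then $bb^*b=ba^*b=bb^*x^*b$, and since $w=b-x^*b$ satisfies $(b^*w)^*(b^*w)=w^*(bb^*w)=0$, we get $b^*b=b^*x^*b=a^*b=y^*b^*b$; setting $z=b(\textbf{1}-y)$, then $z^*z=(\textbf{1}-y^*)b^*b(\textbf{1}-y)=0$, so $b=by=a$.

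Your transitivity paragraph, by contrast, is essentially correct precisely because of the ``fallback'' you describe: with $a=xb=by$ one computes
$$aa^*a=ab^*a=x(bb^*b)y=x(bc^*b)y=ac^*a,$$
which is the paper's argument verbatim and never uses regularity of $a$, $b$ or $c$ (nor does it need the route through $\leq^-$ and Moore--Penrose inverses, which would again founder on possible non-regularity). So what you flag as a fallback should not be a fallback at all: that computation, together with the cancellation argument for antisymmetry sketched above, is the whole proof, and the detour through Proposition \ref{diamprop}(3) should be discarded.
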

\begin{proof}
Reflexivity of the relation ``$\ld$ '' is clear.

Let $a,b\in A$ such that $a\ld b$ and $b\ld a$. In particular, $aa^* a=ab^*a$, $bb^*b=ba^*b$, and there exist $x, y \in A$ such that $a=xb=by$.
Since $bb^*b=bb^* x^* b$, it follows by cancellation that, $b^*b=b^* x^* b=a^*b$. That is,  $b^*b=y^*b^*b$, which shows that $b^*=y^*b^*=a^*$, equivalently $a=b$. This proves that the relation ``$\ld$ '' is  anti-symmetric.

Finally, in order to prove the transitivity of ``$\ld$ '', take  $a,b,c\in A$ such that $a\ld b$ and $b\ld c$. Clearly, $a\leq_{sp} c$. Let $x,y\in A$ be such that $a=xb=by$. If follows that
$$aa^*a=ab^*a=xbb^*by=xbc^*by=ac^*a,$$
and hence $a\ld c$, as desired.
\end{proof}
In the next proposition we characterize projections in terms of the diamond partial order. We generally denote the identity element of any C$^*$-algebra by $\textbf{1}$.
\begin{proposition}\label{proj} Let $A$ be a unital $C^*$-algebra. The following conditions are equivalent:
\begin{enumerate}
\item $p\in \textrm{Proj}(A)$,
\item $p\ld \textbf{1}$ and $ \textbf{1}-p\ld  \textbf{1}$,
\item there is $q\in \textrm{Proj}(A)$, such that $p\ld q$ and $ q-p\ld  q$.
\end{enumerate}
\begin{proof}
It is clear that \emph{(1)}$\Rightarrow$\emph{(2)}$\Rightarrow$\emph{(3)}.

Assume that \emph{(3)} holds. Let $q\in \textrm{Proj}(A)$  such that $p\ld q$ and $ q-p\ld  q$. There exist $x,y \in A$ such that $p=qx=yq$, which shows that \begin{equation}\label{p1}
p=qp=pq.
\end{equation} Hence
\begin{equation}\label{p2}
pp^*p=pqp=p^2.
\end{equation} Moreover, by transitivity, since $ q-p\ld  q$, we have $ q-p\ld  \textbf{1}.$ In particular,
\begin{equation}\label{p3}
(q-p)(q-p)^*(q-p)=(q-p)^2.
\end{equation} From Equations (\ref{p1}),  (\ref{p2})  and (\ref{p3}), we deduce
\begin{equation*}
p^2+p^*=pp^*p+p^*=pp^*+p^* p.
\end{equation*}
Multiplying this identity by $p$ on the left and on the right, and havind in mind Equation (\ref{p2}), we get
\begin{equation*}
p^4+p^2=p^3+p^3.
\end{equation*}
Equivalently,
\begin{equation*}
p^2(\textbf{1}-p)^2=0.
\end{equation*}
From the last identity, and Equations (\ref{p2}) and (\ref{p3}) it is clear that
$$0=p^2(\textbf{1}-p)^2q^2=p^2(q-p)^2=pp^*p(q-p)(q-p)^*(q-p).$$
By cancellation, we get $p(q-p)=0$. That is $pp^*p=p^2=pq=p$, which shows that $p\in\textrm{Proj}(A)$, as claimed.
\end{proof}
\end{proposition}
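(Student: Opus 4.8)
The plan is to prove the cycle $(1)\Rightarrow(2)\Rightarrow(3)\Rightarrow(1)$, with essentially all the work in the last arrow. For $(1)\Rightarrow(2)$ I would observe that if $p$ is a projection then so is $\textbf{1}-p$, and that any projection $r$ satisfies $r\ld\textbf{1}$: the containments $rA\subset A$, $Ar\subset A$ are automatic, while $rr^*r=r^3=r=r^2=r\textbf{1}^*r$. Applying this to $r=p$ and $r=\textbf{1}-p$ yields $(2)$. For $(2)\Rightarrow(3)$ I would simply take $q=\textbf{1}\in\textrm{Proj}(A)$.

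For $(3)\Rightarrow(1)$, I would first unpack the space condition in $p\ld q$: since $A$ is unital, $p\in pA\subset qA$ and $p\in Ap\subset Aq$ give $x,y$ with $p=qx=yq$, whence $qp=pq=p$ (and, on taking adjoints, $qp^*=p^*q=p^*$). Feeding $q^*=q$ and $qp=p$ into the diamond equality $pp^*p=pq^*p$ gives $pp^*p=p^2$. Next, since $q\ld\textbf{1}$ by the already-proved implication $(1)\Rightarrow(2)$, transitivity of $\ld$ (Proposition~\ref{diam-part}) promotes $q-p\ld q$ to $q-p\ld\textbf{1}$, i.e. $(q-p)(q-p)^*(q-p)=(q-p)^2$.

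The algebraic core comes next. Expanding this last identity with $pq=qp=p$ and $qp^*=p^*q=p^*$, the common $(q-p)^2$ summands cancel, leaving $pp^*+p^*p=p^*+pp^*p$, and with $pp^*p=p^2$ this becomes $pp^*+p^*p=p^*+p^2$. Multiplying on both sides by $p$ and again using $pp^*p=p^2$ collapses everything to $p^2(\textbf{1}-p)^2=0$. The key manoeuvre is then to reintroduce $q$: multiplying on the right by $q^2=q$ and using $(\textbf{1}-p)^2q^2=(q-p)^2$ rewrites this as $p^2(q-p)^2=0$, and the two cube-identities $pp^*p=p^2$ and $(q-p)(q-p)^*(q-p)=(q-p)^2$ recast it in the $*$-symmetric form
\[
pp^*p\,(q-p)(q-p)^*(q-p)=0 .
\]

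I expect the cancellation to be the main obstacle, and the whole point of engineering the symmetric cube form is to make it routine: in a $C^*$-algebra $z^*z=0$ forces $z=0$, so one strips factors from the vanishing product one at a time (each step of the shape $XX^*Y=0\Rightarrow X^*Y=0$) down to $p^*p(q-p)=0$, and a final left multiplication by $(q-p)^*$ turns this into $\big(p(q-p)\big)^*\big(p(q-p)\big)=0$, giving $p(q-p)=0$. Since $pq=p$, this is $p=p^2$ and hence $pp^*p=p$. To finish I would note that $(pp^*)^2=pp^*pp^*=(pp^*p)p^*=pp^*$, so $pp^*$ is a self-adjoint idempotent, whence $\|p\|^2=\|pp^*\|\le1$; a nonzero idempotent of norm at most one is a projection, so $p\in\textrm{Proj}(A)$. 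The two delicate points are the bookkeeping in the cancellation and this last passage from ``$pp^*p=p$ with $p$ idempotent'' to genuine self-adjointness.
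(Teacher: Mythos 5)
Your proof is correct and follows essentially the same route as the paper's: the same relations $pq=qp=p$ and $pp^*p=p^2$, the same use of transitivity to get $(q-p)(q-p)^*(q-p)=(q-p)^2$, the same identity $p^2(\textbf{1}-p)^2=0$, the same reintroduction of $q$ to reach $pp^*p(q-p)(q-p)^*(q-p)=0$, and the same $*$-cancellation down to $p(q-p)=0$. The only difference is that you make explicit two steps the paper compresses --- the factor-stripping in the cancellation, and the final passage from ``$p^2=p$ and $pp^*p=p$'' to $p=p^*$ via the fact that an idempotent of norm at most one is a projection, a point the paper simply asserts.
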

Next our aim is to characterize the maximal and minimal elements on a unital C$^*$-algebra with respect to the diamond partial order.
As usual, we denote by  $A^{-1}_l$, $ A^{-1}_r$, the set of left invertible elements and right invertible elements, respectively, in a unital C$^*$-algebra $A$.

\begin{proposition} Let $A$ be a unital prime C*-algebra. The following conditions are equivalent:

\begin{enumerate}

\item $a\in A^\wedge$ and $a$ is maximal with respect to the diamond partial order,

\item $a\in A^{-1}_l\cup A^{-1}_r$.

\end{enumerate}

\end{proposition}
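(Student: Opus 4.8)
The plan is to prove the two implications separately, using throughout the two projections $p:=aa^\dagger$ and $q:=a^\dagger a$ attached to a regular element, together with the characterization of Proposition \ref{diamprop}(1): for $a\in A^\wedge$ and $b\in A$ one has $a\ld b$ if and only if $a\leq_{sp}b$ and $a^\dagger b a^\dagger=a^\dagger$. I shall freely use the Moore--Penrose identities $a^\dagger p=a^\dagger=qa^\dagger$ and $pa=a=aq$, together with the elementary facts that $a$ is right invertible iff $aA=A$ iff $p=\uno$, and $a$ is left invertible iff $Aa=A$ iff $q=\uno$; in either case the one-sided inverse is also an inner inverse, so $a$ is automatically regular.

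For (2)$\Rightarrow$(1), suppose first that $a$ is left invertible, so that $a\in A^\wedge$ and $q=a^\dagger a=\uno$, and let $b\in A$ satisfy $a\ld b$. Proposition \ref{diamprop}(1) gives $a^\dagger b a^\dagger=a^\dagger$; multiplying on the right by $a$ and using $a^\dagger a=\uno$ yields $a^\dagger b=\uno$. Moreover $a\leq_{sp}b$ gives $a\in aA\subset bA$, say $a=bz$ with $z\in A$, and then $z=(a^\dagger b)z=a^\dagger(bz)=a^\dagger a=\uno$, so $a=b$. Hence $a$ is maximal. The right invertible case is symmetric: now $p=aa^\dagger=\uno$, multiplying $a^\dagger b a^\dagger=a^\dagger$ on the left by $a$ gives $ba^\dagger=\uno$, while $Aa\subset Ab$ provides $a=zb$ with $z=(zb)a^\dagger=aa^\dagger=\uno$, so again $a=b$. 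Note that primeness plays no role in this direction.

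For (1)$\Rightarrow$(2) I argue by contraposition, and it is here that primeness is used. Assume $a\in A^\wedge$ is neither left nor right invertible, so that $\uno-p\neq 0$ and $\uno-q\neq 0$. Since $A$ is prime, $(\uno-p)A(\uno-q)\neq 0$, so I may pick $z\in A$ with $w:=(\uno-p)z(\uno-q)\neq 0$ and set $b:=a+w$. The perturbation $w$ is annihilated by $a^\dagger$ on both sides, because $a^\dagger(\uno-p)=0$ and $(\uno-q)a^\dagger=0$; thus $a^\dagger w=wa^\dagger=0$. Consequently $a^\dagger b a^\dagger=a^\dagger$, $ba^\dagger=aa^\dagger=p$ and $a^\dagger b=a^\dagger a=q$. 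Combining the last two identities with $pa=a=aq$ gives $a=bq\in bA$ and $a=pb\in Ab$, that is $a\leq_{sp}b$; Proposition \ref{diamprop}(1) then yields $a\ld b$, while $b\neq a$ because $w\neq 0$. Therefore $a$ is not maximal, which is the required contrapositive.

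The substance of the argument, and the only genuinely delicate step, is the construction of a strictly larger element in the direction (1)$\Rightarrow$(2): one must perturb $a$ by something that is nonzero yet invisible to $a^\dagger$ from both sides, so that the diamond relation is preserved. It is precisely primeness that guarantees such a perturbation exists in the corner $(\uno-p)A(\uno-q)$ whenever $a$ fails to be invertible on both sides. The remaining verifications are routine manipulations with $p$, $q$, $a^\dagger$ and Proposition \ref{diamprop}(1).
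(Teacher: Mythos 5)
Your proof is correct and takes essentially the same approach as the paper: the heart of (1)$\Rightarrow$(2) is the identical perturbation, since the paper observes that $a \ld a+(\uno-aa^\dagger)x(\uno-a^\dagger a)$ for all $x\in A$, so that maximality plus primeness forces $aa^\dagger=\uno$ or $a^\dagger a=\uno$. Your only departures are cosmetic: you phrase that direction as a contrapositive and verify the diamond relations explicitly via Proposition \ref{diamprop}(1), and in (2)$\Rightarrow$(1) you use Moore--Penrose identities where the paper manipulates the defining identity $aa^*a=ab^*a$ together with $*$-cancellation, both being routine.
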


\begin{proof} Let $a\in A^\wedge$. It is straightforward to see that $$a\ld a+(1-aa^\dagger)x(1-a^\dagger a),$$ for every $x\in A$. If we suppose that $a$ is maximal with respect to "$\ld$", this gives $(1-aa^\dagger)x(1-a^\dagger a)=0$ for every $x\in A$. Since $A$ is prime, it yields to $\textbf{1}=aa^\dagger$ or $\textbf{1}=a^\dagger a$.

Reciprocally, assume that $a\in A^{-1}_l$. Let $b\in A$ with $a\ld b$. Then, $aa^*a=ab^*a$ and there exist $x,y\in A$ satisfying $a=bx=yb$. Being $a$ left invertible, from the first identity we get $a^*a=b^*a=a^*b$. Multiplying by $x$ on the right, we obtain $a^*ax=a^*bx=a^*a$ which, by *-cancellation, shows $ax=a$. Since $a\in A^{-1}_l$, this finally gives $x=\textbf{1}$ and, hence, $a=b$. Similar considerations can be made if we suppose $a\in A^{-1}_r$.
\end{proof}
Let $A$ be a unital C$^*$-algebra. A nonzero element $u\in A$ is said to be of \emph{rank-one} if
 $u$ belongs to some minimal left (right) ideal of $A$. Equivalently, $u$ is of rank-one if $ u \neq 0$ and
 $uAu=\CC u$. This is also equivalent to the condition
$u\neq 0$ and $|\sigma(xu)|\setminus
\{0\}\leq 1, $ for all $x\in A$, or equivalently
 $|\sigma(ux)|\setminus \{0\}\leq 1, $ for all $x\in A$.
Here and subsequently, given $a\in A$, $\sigma (a)$ denotes the spectrum of $a$ and  $\textrm{r}(a)$ its spectral radius.
By $F_1(A)$ we denote the set of all rank-one elements of $A$. It is well known that $u\in F_1(A)$ if and only if there exists a unique linear functional $\tau_u$ on $A$ such that $\tau_u(x)u=uxu$, for all $x\in A$. Moreover, $\sigma(a)=\{0,\tau_u(\uno)\}$. The complex number $\tau (u):=\tau_u(\uno)$ is called the \emph{trace} of $u$.
An element $x$ of $A$ is
\emph{finite} (\emph{compact}) in $A$, if the wedge operator
$x\wedge x:A \to A$, given by $x\wedge x (a)=xax$, is a finite rank
(compact) operator on $A$. It is known that the ideal
$ F (A)$ of finite rank elements in $A$ coincides with its \emph{socle}, $\soc (A)$, that is, the sum of all  minimal
right (equivalently left) ideals of $A$, and that $K
(A)=\overline{\soc(A)}$ is the ideal of compact elements in $A$.
Every element in the socle of a C$^*$-algebra is a linear
combination of minimal projections.  Moreover $\soc(A)\subseteq A^\wedge$.
\smallskip

Finally, recall that a non zero ideal $I$ of $A$ is \emph{essential} if it has non zero intersection with every non zero ideal of $A$, or equivalently (as every C$^*$-algebra is semisimple), the condition $aI=0$ for all $a\in A$, implies $a=0$.
\begin{proposition}\label{minimals}Let $A$ be a unital C$^*$-algebra with essential socle. Then $F_1(A)=\mathrm{Minimals}_{\ld} (A\setminus\{0\})$.
\end{proposition}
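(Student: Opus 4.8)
The plan is to prove the two inclusions separately, in each case exploiting the characterization of a rank-one element $u$ by $uAu=\CC u$, equivalently $uzu=\tau_u(z)\,u$ for all $z\in A$, together with the fact that for rank-one $u$ the Moore--Penrose projections $uu^\dagger$ and $u^\dagger u$ are minimal projections and that $pAq$ is at most one-dimensional whenever $p,q$ are minimal projections.

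\emph{Inclusion $F_1(A)\subseteq \mathrm{Minimals}_{\ld}(A\setminus\{0\})$.} Let $u\in F_1(A)$ and suppose $0\neq b\ld u$; I must show $b=u$. Rank-one elements are regular, so put $p=uu^\dagger$ and $q=u^\dagger u$, which are minimal projections with $pu=uq=u$, hence $puq=u$. From $b\leq_{sp}u$ we get $b=uc$ and $b=du$ for some $c,d\in A$, whence $pb=b=bq$ and therefore $b=pbq\in pAq$. Since $p,q$ are minimal, $pAq$ is at most one-dimensional and contains $u=puq\neq 0$, so $b=\lambda u$ for a scalar $\lambda\neq 0$. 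Writing $uu^*u=\mu u$ with $\mu=\tau_u(u^*)>0$ and substituting $b=\lambda u$ into the diamond equation $bb^*b=bu^*b$ yields $|\lambda|^2\lambda\,\mu\,u=\lambda^2\mu\,u$, hence $|\lambda|^2=\lambda$ and so $\lambda=1$. Thus $b=u$ and $u$ is minimal.

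\emph{Inclusion $\mathrm{Minimals}_{\ld}(A\setminus\{0\})\subseteq F_1(A)$.} Let $a$ be minimal. The plan is to exhibit a nonzero rank-one element $w'\ld a$; minimality then forces $a=w'\in F_1(A)$. Since $\soc(A)$ is essential and is spanned by minimal projections, there is a minimal projection $e$ with $ae\neq 0$. Set $v=ae$, which is rank-one, and let $p=vv^\dagger$, a minimal projection. The decisive observation is that $p=vv^\dagger=a(ev^\dagger)\in aA$, so that $w:=pa=a(ev^\dagger a)$ again lies in $aA$, while $Aw\subseteq Aa$ holds trivially; hence $w\leq_{sp}a$. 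Moreover $w\in pA$ is a nonzero element of a minimal right ideal (it is nonzero because $we=pv=v\neq 0$), so $w$ is rank-one. Writing $pzp=\phi_p(z)\,p$, a short computation gives $wa^*w=\phi_p(aa^*)\,w$ with $\phi_p(aa^*)>0$, the positivity following since $\phi_p(aa^*)=0$ would force $a^*p=0$, i.e. $w=0$. As $w$ is rank-one we also have $ww^*w=\beta w$ with $\beta>0$. Choosing $t=\phi_p(aa^*)/\beta>0$ and $w'=tw$, the diamond equation $w'(w')^*w'=w'a^*w'$ reduces to $t\beta=\phi_p(aa^*)$, which holds by construction. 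Thus $w'\ld a$ is a nonzero rank-one element, and minimality gives $a=w'\in F_1(A)$.

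The delicate step is the second inclusion, where $a$ is only assumed minimal and \emph{need not be regular}, so the support projections $aa^\dagger,a^\dagger a$ are unavailable. The one-sided multiples $ae$ and $pa$ that are easy to form are each below $a$ on only one side, and the genuine obstacle is to obtain an element that is below $a$ on \emph{both} sides while still being rank-one. The idea that resolves this is to route through the rank-one element $v=ae$ and its range projection $p=vv^\dagger$: because $v\in aA$, the projection $p$, and hence $pa$, stays inside $aA$, which repairs two-sidedness with no regularity hypothesis on $a$. Once this is in place, the only remaining point is the strict positivity of the scalar $\phi_p(aa^*)$ controlling the diamond equation, which is automatic from $w=pa\neq 0$ and is exactly what allows the single real rescaling $w'=tw$ to enforce $w'\ld a$.
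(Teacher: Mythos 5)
Your proof is correct, but it is worth separating how it relates to the paper's argument, which differs in one half and coincides (with a redundancy on your side) in the other. For the inclusion $\mathrm{Minimals}_{\ld}(A\setminus\{0\})\subseteq F_1(A)$ you and the paper do essentially the same thing: both compress $a$ on the left by the range projection of $a$ times a rank-one element (the paper takes $p=aw(aw)^\dagger$ for rank-one $w$ with $aw\neq 0$; you take $p=(ae)(ae)^\dagger$ for a minimal projection $e$ with $ae\neq 0$) and check that $pa$ is a nonzero rank-one element diamond-below $a$. However, your final rescaling $w'=tw$ is unnecessary: for $w=pa$ with $p$ a projection one has identically $ww^{*}w=pa\,a^{*}p\,pa=p(aa^{*})p\,a=wa^{*}w$, so in your notation $\beta=\phi_p(aa^{*})$ and $t=1$; the paper's computation $uu^*u=avaa^*avava=ua^*u$ exploits exactly this identity, so the diamond equation holds on the nose with no scaling. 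For the inclusion $F_1(A)\subseteq\mathrm{Minimals}_{\ld}(A\setminus\{0\})$ the routes genuinely differ. The paper shows that two rank-one elements comparable under $\ld$ must be equal (a $*$-cancellation computation), and then minimality of rank-one elements follows by combining this with the first part together with transitivity and antisymmetry of $\ld$ (Proposition \ref{diam-part}). You argue directly: $b\ld u$ forces $b=pbq\in pAq$ with $p=uu^\dagger$, $q=u^\dagger u$ minimal projections, and since $pAq$ is at most one-dimensional, $b=\lambda u$; the diamond equation then gives $|\lambda|^{2}=\lambda$, hence $\lambda=1$. Your version is self-contained with respect to the order axioms and reduces minimality to a scalar identity, at the cost of invoking the fact (standard, but not stated in the paper) that $pAq$ is at most one-dimensional for minimal projections $p,q$; the paper's version avoids that structure fact but leans on the already-established partial-order properties of $\ld$.
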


\begin{proof} Let us first show that for every $a\in A\setminus\{0\}$, there exists $u\in F_1(A)$ such that $u\ld a$.

Since $A$ is semisimple and has essential socle, given $a\in A\setminus\{0\}$, there exists  $w\in F_1(A)$ such that $aw\neq 0$. Let $v=w(aw)^\dagger\in F_1(A)$. Then $av$ is a minimal projection in $A$. Set $u=ava$. Clearly, $uA\subset aA$ and $Au\subset Aa$. Moreover, $$uu^*u=(ava)(ava)^*(ava)=avaa^*avava=(ava)a^*(ava)=ua^*u,$$that is, $u\ld a$.

To finish the proof, we show that, given $u,v\in F_1(A)$, with $u\ld v$ then  $u=v$. Indeed, since $u\leq_{sp} v$ (and $u,v\in F_1(A)$), it is clear that $uA=vA$ and $Au=Av$. In particular, $v=uz=wu$ for some $w,z\in A$. Accordingly, from $uu^*u=uv^*u$ we get $uu^*u=uu^*w^*u$. By *-cancellation, we get $u^*u=u^*w^*u=z^*u^*u$, and hence $u^*=z^*u^*=v^*$. That is, $u=v$.
\end{proof}

Let $A$ be a unital prime C*-algebra with non zero socle. Then $A$ is primitive and has essential socle.
Let us assume that $e$ is a minimal projection in $A$. Then
the minimal left ideal $ Ae$ can be endowed with an inner product, $\langle x, y\rangle e = y^*x$
(for all $ x, y \in Ae$), under which $Ae$ becomes a Hilbert space in the algebra norm.
Let $\rho : A \to B(Ae)$ be the left regular representation on $Ae$, given by $\rho(a)(x) = ax$ ($x\in Ae$).
The mapping  $\rho$ is an isometric irreducible $^*$-representation, satisfying:
\begin{enumerate}
\item $\rho(\soc(A))=F(Ae)$,
\item $\rho(\overline{\soc(A)})=K(Ae)$,
\item $\sigma_A(x)=\sigma_{ B(Ae)}(\rho(x))$, for every $x\in A$.
\end{enumerate}
(See \cite[Section F.4]{BMSW}.)

\begin{proposition}\label{invprdiamond}Let $A$ be a unital prime C*-algebra with non zero socle and $a\in A$. The following conditions are equivalent:

\begin{enumerate}

\item $a\in A^{-1}$,

\item For every $u\in F_1(A)$, there exist non zero $x\in uA$ and $y\in Au$ such that $x,y\ld a$.

\end{enumerate}

\end{proposition}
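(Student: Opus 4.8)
The plan is to transfer everything to the irreducible representation $\rho\colon A\to B(H)$, $H=Ae$, recalled just above: $\rho$ is an isometric $^{*}$-isomorphism onto a unital $^{*}$-subalgebra containing $F(H)$, it carries $F_1(A)$ onto the rank-one operators $\xi\eta^{*}$ of $F(H)$, and $\sigma_A(a)=\sigma_{B(H)}(A_0)$ for $A_0:=\rho(a)$. In particular $a\in A^{-1}$ if and only if $A_0$ is invertible in $B(H)$. I would prove the two implications separately.

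For \emph{(1)}$\Rightarrow$\emph{(2)} no representation is needed. As $a$ is invertible, $aA=Aa=A$, so the space inclusions in Definition \ref{diamdef} hold for every element and $z\ld a$ is equivalent to $zz^{*}z=za^{*}z$. Fix $u\in F_1(A)$ and set $d=(aa^{*})^{-1}$. Since $uu^{*}$ is a positive rank-one element, $uu^{*}Auu^{*}=\CC\,uu^{*}$, hence
\[
uu^{*}duu^{*}=(d^{1/2}uu^{*})^{*}(d^{1/2}uu^{*})
\]
is a nonnegative multiple of $uu^{*}$; it is nonzero because $d$ is invertible and $u\neq 0$, so $uu^{*}duu^{*}=\kappa\,uu^{*}$ with $\kappa>0$. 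I would then put $x=\kappa^{-1}uu^{*}(a^{*})^{-1}\in uA\setminus\{0\}$ and check, using $d=(a^{*})^{-1}a^{-1}$ and the displayed identity, that
\[
xx^{*}x=\kappa^{-2}(uu^{*})^{2}(a^{*})^{-1}=xa^{*}x,
\]
so that $x\ld a$. An element $y\in Au\setminus\{0\}$ with $y\ld a$ then comes for free from symmetry: the diamond order is $^{*}$-invariant (taking adjoints in its three defining conditions gives $p\ld q\Leftrightarrow p^{*}\ld q^{*}$), so applying the construction to the invertible $a^{*}$ and the rank-one $u^{*}$ yields $\tilde{x}\in u^{*}A\setminus\{0\}$ with $\tilde{x}\ld a^{*}$, whence $y:=\tilde{x}^{*}\in Au\setminus\{0\}$ satisfies $y\ld a$.

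For \emph{(2)}$\Rightarrow$\emph{(1)} I would read the hypothesis inside $B(H)$. Given $\xi\in H\setminus\{0\}$, pick $u\in F_1(A)$ with $\rho(u)=\xi\eta^{*}$; by hypothesis there is a nonzero $x\in uA$ with $x\ld a$, so $X:=\rho(x)\in\rho(u)\rho(A)$ is a nonzero rank-one operator of the form $\xi\mu^{*}$. Applying $\rho$ to $xA\subset aA$ gives $X\rho(A)\subset A_0\rho(A)$; choosing $c\in A$ with $X\rho(c)\neq 0$ (possible since $F(H)\subset\rho(A)$) we obtain $\CC\xi=R(X\rho(c))\subseteq R(A_0)$, so $\xi\in R(A_0)$. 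As $\xi$ was arbitrary, $A_0$ is surjective. Dually, feeding a nonzero $y\in Au$ with $y\ld a$ into $Ay\subset Aa$, i.e.\ $\rho(A)Y\subset\rho(A)A_0$ with $Y=\rho(y)=\nu\eta^{*}$, and passing to adjoints, gives $\eta\in R(A_0^{*})$ for every $\eta\neq 0$; hence $R(A_0^{*})=H$ and $N(A_0)=R(A_0^{*})^{\perp}=\{0\}$. Thus $A_0$ is a bounded bijection of $H$, therefore invertible, and so is $a$.

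The main obstacle is the bookkeeping in \emph{(2)}$\Rightarrow$\emph{(1)}. The diamond order lives in $A$, while the surjectivity and injectivity of $A_0$ are visible only in $B(H)$, and $\rho(A)$ is merely a unital $^{*}$-subalgebra of $B(H)$ (containing $F(H)$), not all of it. The delicate step is therefore to convert the \emph{one-sided} inclusions $xA\subset aA$ and $Ay\subset Aa$ into honest operator-range inclusions, exploiting the wealth of finite-rank multipliers in $\rho(A)$ to ensure that the rank-one operators $X\rho(c)$ and $\rho(c)Y$ do not collapse and genuinely place $\xi$ in $R(A_0)$ and $\eta$ in $R(A_0^{*})$. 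Once this is done, surjectivity of both $A_0$ and $A_0^{*}$, hence invertibility, follow immediately.
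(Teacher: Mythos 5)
Your proof is correct, and while it shares the paper's overall skeleton --- explicit witnesses for \emph{(1)}$\Rightarrow$\emph{(2)}, and for \emph{(2)}$\Rightarrow$\emph{(1)} a reduction to bijectivity of $\rho(a)$ in the left regular representation followed by spectral permanence --- the working parts are genuinely different. For \emph{(1)}$\Rightarrow$\emph{(2)} the paper observes that $pa\ld a$ and $ap\ld a$ for every projection $p$ once $a$ is invertible, and simply takes $x=uu^\dagger a\in uA$ and $y=au^\dagger u\in Au$; your witness $x=\kappa^{-1}uu^*(a^*)^{-1}$ (with $uu^*duu^*=\kappa uu^*$, $d=(aa^*)^{-1}$, $\kappa>0$) is longer but avoids Moore--Penrose inverses, and your use of the $*$-invariance of $\ld$ to produce $y$ is clean and correct. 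For \emph{(2)}$\Rightarrow$\emph{(1)} the divergence is larger: the paper stays algebraic inside $A$, using the trace functional and the Moore--Penrose inverses of $ux$ and $yu$ to show that every $u\in F_1(A)$ lies in $aA\cap Aa$; injectivity of $\rho(a)$ then comes from the essential-socle annihilator argument ($\mathrm{ann}_l(a)=\mathrm{ann}_r(a)=\{0\}$), and surjectivity from the factorization $ze=a(\,\cdot\,)$ with the factor visibly in $Ae$. You instead transfer at once to $B(Ae)$ and argue with operator ranges: compressing the rank-one images $X=\xi\mu^*$ and $Y=\nu\eta^*$ by finite-rank elements of $\rho(A)$ (legitimate since $F(Ae)\subseteq\rho(A)$) places $\xi\in R(\rho(a))$ and $\eta\in R(\rho(a)^*)$, and injectivity comes from $N(\rho(a))=R(\rho(a)^*)^{\perp}$ rather than from essentiality of the socle. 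Both routes conclude via $\sigma_A(a)=\sigma_{B(Ae)}(\rho(a))$. The only step you assert without argument --- that $\rho$ carries $F_1(A)$ onto the rank-one operators of $F(Ae)$ --- is standard, following from $\rho(\soc(A))=F(Ae)$, injectivity of $\rho$, and the fact that $F(Ae)\subseteq \rho(A)$. As for what each approach buys: yours dispenses with generalized inverses and trace identities entirely, at the cost of leaning on the Hilbert-space adjoint; the paper's annihilator argument is purely ring-theoretic and adapts more readily to settings (e.g.\ semisimple Banach algebras) where no adjoint is available.
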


\begin{proof}
Notice that for every  $a\in A^{-1}$ (even though $A$ is non necessarily prime), and every $p\in \textrm{Proj}(A)$, $pa\ld a$ and $ap\ld a$. In particular, for every  $a\in A^{-1}$ and every
$u\in F_1(A)$, $uu^\dagger a \ld a$ and $au^\dagger u \ld a$. This proves that \emph{(1)}$\Rightarrow$\emph{(2)}.

Reciprocally, assume that condition \emph{(2)} is fulfilled. For any $u\in F_1(A)$, there exist $x,y\in A$ such that $uxA\subset aA$ and $Ayu\subset Aa$. Consequently, $ux=az$ and $yu=wa$ for some $z,w\in A$. Therefore, $$u=\tau (ux(ux)^\dagger)u=ux(ux)^\dagger u=a\left(z(ux)^\dagger u\right) , $$and $$u=\tau((yu)^\dagger yu)u=u(yu)^\dagger yu=\left(u(yu)^\dagger w\right)a.$$
In particular, $\textrm{ann}_l(a)\subseteq \textrm{ann}_l(u)$ and $\textrm{ann}_r(a)\subseteq \textrm{ann}_r(u)$ , for every  $u\in F_1(A)$.
Since $A$ has essential socle, we conclude that $\textrm{ann}_l(a)=\{0\}$ and  $\textrm{ann}_r(a)=\{0\}$. That is, $a$ is not a zero divisor.
Fix $e$ a minimal projection in $A$ and let $\rho$ denote the  left regular representation on $B(Ae)$.
From $\textrm{ann}_r(a)=\{0\}$ it is clear that $\rho(a)$ is injective. Moreover, given $ze\in Ae$, by hypothesis, there exists $w\in A$ such that $ze=awze=\rho(a)(wze)$. This shows that $\rho(a)$ is surjective, and hence $\rho(a)$ is invertible.  That is, $a\in A^{-1}$.
\end{proof}
It is straightforward to show that for every unitary element $u$ in a $C^*$-algebra $A$,  $a\ld b$ if and only if $ua\ld ub$, for every $a,b\in A.$
\begin{proposition}\label{iso-diam} Let $A$ be a unital $C^*$-algebra, and $u\in A$.
\begin{enumerate}
\item If  $u^* u=\lambda\textbf{1}$,  for some $\lambda \in \RR ^+$, then
$$a\ld b \Rightarrow ua\ld ub, \quad \mbox{for every }\,a,b\in A.$$
\item  If  $u u^* =\lambda\textbf{1}$,  for some $\lambda \in \RR ^+$, then
$$a\ld b \Rightarrow au\ld bu, \quad \mbox{for every }\,a,b\in A.$$
\end{enumerate}
\end{proposition}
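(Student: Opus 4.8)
The plan is to unravel the definition of $\ld$ into its three defining conditions and check that each is inherited by $ua,ub$ from $a,b$. Recall that $a\ld b$ means $aA\subset bA$, $Aa\subset Ab$ and $aa^*a=ab^*a$. The hypothesis $u^*u=\lambda\textbf{1}$ with $\lambda\in\RR^+$ says precisely that $\tfrac{1}{\lambda}u^*$ is a left inverse of $u$ (equivalently $\tfrac{1}{\sqrt{\lambda}}u$ is an isometry, since $\tfrac{1}{\lambda}u^*u=\textbf{1}$); this one-sided invertibility is what I will exploit. I will treat part (1) in detail and deduce part (2) by the evident left--right symmetry.

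For part (1) the right ideal inclusion is free: since $aA\subset bA$ we have $(ua)A=u(aA)\subset u(bA)=(ub)A$, and no hypothesis on $u$ is needed here. The cubic identity is a direct computation: using $(ua)^*=a^*u^*$ and $u^*u=\lambda\textbf{1}$ to collapse the middle factor,
$$(ua)(ua)^*(ua)=u\,a a^*(u^*u)a=\lambda\,u\,aa^*a,\qquad (ua)(ub)^*(ua)=u\,a b^*(u^*u)a=\lambda\,u\,ab^*a,$$
so the assumption $aa^*a=ab^*a$ immediately yields $(ua)(ua)^*(ua)=(ua)(ub)^*(ua)$.

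The remaining, and genuinely delicate, point is the left ideal inclusion $A(ua)\subset A(ub)$, which is exactly where the left invertibility of $u$ enters. Given $x\in A$, apply $Aa\subset Ab$ to the element $xu\in A$ to find $z\in A$ with $x(ua)=(xu)a=zb$. Now rewrite $b=\tfrac{1}{\lambda}(u^*u)b$ to obtain $x(ua)=zb=\tfrac{1}{\lambda}(zu^*)(ub)\in A(ub)$. This is the step I expect to be the main obstacle, since without the factorization $b=\tfrac{1}{\lambda}u^*ub$ there is no reason for $zb$ to lie in the left ideal generated by $ub$. Collecting the three conditions gives $ua\ld ub$, proving (1).

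Part (2) is the mirror image. Under $uu^*=\lambda\textbf{1}$ the element $\tfrac{1}{\lambda}u^*$ is a \emph{right} inverse of $u$, so the roles of the two ideal inclusions are interchanged. Now the left ideal inclusion $A(au)=(Aa)u\subset(Ab)u=A(bu)$ is the free one, while the right ideal inclusion $(au)A\subset(bu)A$ is the delicate step handled symmetrically: from $aA\subset bA$ write $a(ux)=by$ and use $b=\tfrac{1}{\lambda}b(uu^*)$ to get $by=\tfrac{1}{\lambda}(bu)(u^*y)\in(bu)A$. The cubic identity again follows by collapsing $uu^*=\lambda\textbf{1}$ in the middle of $(au)(au)^*(au)=\lambda\,aa^*a\,u$ and $(au)(bu)^*(au)=\lambda\,ab^*a\,u$ and invoking $aa^*a=ab^*a$. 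Hence $au\ld bu$, which completes the proof.
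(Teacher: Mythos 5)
Your proof is correct and takes essentially the same route as the paper: the right-ideal inclusion and the cubic identity $(ua)(ua)^*(ua)=(ua)(ub)^*(ua)$ are verified directly, and the left-ideal inclusion $A(ua)\subset A(ub)$ is obtained from the factorization $b=\tfrac{1}{\lambda}u^*ub$, i.e.\ the left invertibility of $u$, with part (2) handled by the mirror argument. Your write-up merely spells out the step the paper compresses into ``since $u$ is left invertible and $Aa\subseteq Ab$, we get $Aua\subseteq Aub$.''
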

\begin{proof}
We only prove the first assertion (the second can be shown in a similar way).
Suppose that $u^* u=\lambda\textbf{1}$, and let $a,b \in A$ with $a\ld b $.
As $aA\subseteq bA$ obviously $uaA\subseteq ubA,$ and since $u$ is left invertible and $Aa\subseteq Ab$, we get $Aua\subseteq Aub.$ Moreover,
$$(ua)(ua)^*(ua)=uaa^*u^*ua=\lambda uaa^*a=\lambda uab^*a=(ua)(ub)^*(ua),$$ which shows that $ua\ld ub$.
\end{proof}
We conclude this section by characterizing the scalar multiples of isometries and co-isometries in a  unital prime C$^*$-algebra with non zero socle.
\begin{proposition}\label{iso-diamond-prime}Let $A$ be a unital prime C*-algebra with non zero socle and $u\in A^\wedge$.

\begin{enumerate}

\item  The condition  $$a\ld b \Leftrightarrow au\ld bu, \quad \mbox{for every }\,a,b\in A,$$ implies that $uu^* =\lambda\textbf{1}$,  with $\lambda \in \RR ^+$.

\item  The condition  $$a\ld b \Leftrightarrow ua\ld ub,\quad\mbox{ for every }\,a,b\in A,$$ implies that $u^*u =\lambda\textbf{1}$,  with $\lambda\in \RR ^+$.

\end{enumerate}

\end{proposition}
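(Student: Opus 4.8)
The plan is to prove assertion (1) in full and then read off (2) from it. The key observation is that the diamond order is $*$-invariant: unwinding Definition \ref{diamdef} and taking adjoints gives $a\ld b$ iff $a^*\ld b^*$ (the two space inclusions swap under $*$, and $aa^*a=ab^*a$ becomes $a^*aa^*=a^*ba^*$). Hence the hypothesis of (2), $a\ld b\Leftrightarrow ua\ld ub$, is equivalent to $a'\ld b'\Leftrightarrow a'u^*\ld b'u^*$ after setting $a'=a^*,b'=b^*$, which is precisely the hypothesis of (1) for the regular element $u^*$; its conclusion $u^*(u^*)^*=\lambda\textbf{1}$ is exactly $u^*u=\lambda\textbf{1}$. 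So it suffices to treat (1). First I would extract a one-sided invertibility: if $au=bu$ then $au\ld bu$ and $bu\ld au$, so by hypothesis $a\ld b$ and $b\ld a$, and antisymmetry (Proposition \ref{diam-part}) forces $a=b$. Thus $R_u\colon x\mapsto xu$ is injective, i.e. $\textrm{ann}_l(u)=\{0\}$; since $u\in A^\wedge$, from $uu^\dagger u=u$ we get $(uu^\dagger-\textbf{1})u=0$, whence $uu^\dagger=\textbf{1}$ and in particular $uA=A$. Put $h:=uu^*\geq 0$, and note $h\neq 0$.

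Next I would feed the equivalence only very special pairs in order to turn it into a statement about linear functionals. Fix a minimal projection $e$ (these exist since $\soc(A)\neq 0$) and let $b$ range over the invertible elements. For such $b$ every space condition appearing in $e\ld b$ and in $eu\ld bu$ holds automatically: indeed $bA=A\supseteq eA$, $Ab=A\supseteq Ae$, while $euA=eA\subseteq A=buA$ (this is where $uA=A$ enters) and $Aeu\subseteq Au=Abu$. Computing the remaining third conditions with the trace functional $\tau_e$ (defined by $exe=\tau_e(x)e$), one gets $e\ld b\Leftrightarrow\tau_e(e-b^*)=0$ and $eu\ld bu\Leftrightarrow\tau_e\big(h(e-b^*)\big)=0$. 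Therefore the hypothesis yields, for every invertible $b$, the equivalence $\tau_e(e-b^*)=0\Leftrightarrow\tau_e\big(h(e-b^*)\big)=0$.

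To globalize this, take any $x_0$ with $\tau_e(x_0)=0$ and apply the equivalence to $b^*=\textbf{1}+sx_0$, which is invertible for all small real $s$. Since $\tau_e(e)=\tau_e(\textbf{1})=1$, we have $\tau_e(e-b^*)=0$ identically in $s$, so $\tau_e\big(h(e-b^*)\big)=\tau_e(he)-\tau_e(h)-s\,\tau_e(hx_0)=0$ for all small $s$; as $\tau_e(he)=\tau_e(h)$, this forces $\tau_e(hx_0)=0$. Hence $\ker\tau_e\subseteq\ker\tau_e(h\,\cdot\,)$, so $\tau_e(hx)=\beta_e\tau_e(x)$ for all $x$ and some scalar $\beta_e$. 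Multiplying by $e$ gives $e(h-\beta_e\textbf{1})xe=0$ for every $x$, and primeness (with $e\neq 0$) yields $eh=\beta_e e$; from $ehe=\beta_e e$ and $h=h^*$ one checks $\beta_e\in\RR$ and $he=\beta_e e$, so $h$ commutes with every minimal projection. Since every element of $\soc(A)$ is a linear combination of minimal projections, $h$ commutes with $\soc(A)$; as the socle is essential this forces $ha=ah$ for all $a$, i.e. $h$ is central, and primeness gives $h=\lambda\textbf{1}$. Finally $h=uu^*\geq 0$ and $h\neq 0$ give $\lambda>0$, as required.

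The step I expect to be the main obstacle is this middle passage: converting ``$\ld$ is preserved in both directions'' into the scalar proportionality $\tau_e(h\,\cdot\,)\propto\tau_e$. Two points need care there. First, one must check that the space conditions genuinely drop out for the chosen test pairs, which is exactly where $uu^\dagger=\textbf{1}$ (equivalently $uA=A$) is indispensable. Second, one must supply a family of comparisons rich enough to pin down the functional \emph{without} assuming that the invertible elements are dense; the one-parameter perturbations $\textbf{1}+sx_0$ are designed precisely to circumvent this.
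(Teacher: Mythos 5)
Your proof is correct, but its central step takes a genuinely different route from the paper's. The two arguments share the same skeleton: reduce (2) to (1) (you do this via the $*$-invariance $a\ld b\Leftrightarrow a^*\ld b^*$, the paper by left/right symmetry), deduce $\textrm{ann}_l(u)=\{0\}$ and hence $uu^\dagger=\textbf{1}$ from regularity, and finish by showing that $uu^*$ commutes with every minimal projection, so that essentiality of the socle makes $uu^*$ central and primeness makes it a positive scalar. The difference lies in how commutation is obtained. The paper feeds the hypothesis the pairs $u^\dagger p\ld u^\dagger$ and $u^\dagger(\textbf{1}-p)\ld u^\dagger$ for an \emph{arbitrary} projection $p$, getting $u^\dagger pu\ld u^\dagger u$ and $u^\dagger u-u^\dagger pu\ld u^\dagger u$, and then invokes Proposition \ref{proj} to conclude that $u^\dagger pu$ is a projection; selfadjointness of $u^\dagger pu$, after multiplying by $u$ and $u^*$, gives $p\,uu^*=uu^*p$ for \emph{every} projection $p$. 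You instead test the hypothesis only on the pairs $(e,b)$ with $e$ a minimal projection and $b^*=\textbf{1}+sx_0$ a small invertible perturbation of the identity, convert the diamond relations into identities for the trace functional $\tau_e$ (here your observation that the range/space conditions become automatic once $uA=A$ and $b\in A^{-1}$ is the key simplification), and deduce $\tau_e(uu^*\,\cdot)=\beta_e\tau_e$, whence $uu^*e=euu^*$ by primeness. What each approach buys: yours bypasses Proposition \ref{proj} entirely and uses the preservation hypothesis only on a very thin family of test pairs, which is a sharper, more local use of the assumption; the paper's establishes commutation with all projections, not just minimal ones, and that is precisely what allows the Remark following the proposition (replacing ``prime with nonzero socle'' by ``trivial center plus real rank zero, or linear span of projections'') — your socle/trace argument would not transfer to that setting, since it leans on rank-one elements and on primeness at the intermediate step $e(uu^*)=\beta_e e$.
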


\begin{proof} As in the previous proposition we only need to prove the first assertion.
Assume that
\begin{equation}\label{ispr}
a\ld b \Leftrightarrow au\ld bu, \quad \mbox{for every }\,a,b\in A.
\end{equation}
It is clear that $\textrm{ann}_l (u)=\{0\}.$ Since $u\in A^\wedge$, we conclude that $u$ is right invertible, that is, $uu^\dagger=\textbf{1}.$

Notice that, $$u^\dagger p \ld u^\dagger,\quad \mbox{ for every }\, p\in \textrm{Proj}(A).$$
Indeed, let $p\in \textrm{Proj}(A).$ Then $p\ld \textbf{1}$. It is clear that $u^\dagger p A \subseteq u^\dagger A$ and since  $u^\dagger$ is left invertible $Au^\dagger p\subseteq Au^\dagger.$ Finally $$(u^\dagger p)(u^\dagger p)^* (u^\dagger p)=u^\dagger p (u^\dagger)^*u^\dagger p,$$ gives $u^\dagger p \ld u^\dagger$.
In the same way,
 $$u^\dagger -u^\dagger p= u^\dagger (\textbf{1}-p) \ld u^\dagger,\quad \mbox{ for every }\, p\in \textrm{Proj}(A).$$
Let us apply the condition (\ref{ispr}) with $a=u^\dagger p$ and $b=u^\dagger$. Therefore,
\begin{equation}\label{udg}
u^\dagger p u \ld u^\dagger u.
\end{equation}
Applying now the condition (\ref{ispr}) with $a=u^\dagger -u^\dagger p$ and $b=u^\dagger$, we obtain
\begin{equation}\label{udg2}
u^\dagger u -u^\dagger p u \ld u^\dagger u.
\end{equation}
Having in mind Proposition \ref{proj} and Equations (\ref{udg}) and (\ref{udg2}), we conclude that $u^\dagger p u \in \textrm{Proj}(A)$, for every $p\in \textrm{Proj}(A)$. That is,
$$u^\dagger p u =u^*p (u^\dagger)^*, $$ for every $p\in \textrm{Proj}(A)$. Multiplying this last identity by $u$ on the left, and by $u^*$ on the right, we deduce that
$$puu^*=uu^*p,\quad \mbox{ for every }\, p\in \textrm{Proj}(A).$$
In particular, $uu^*$ commutes with every minimal projection, and hence
$$xuu^*=uu^*x,\quad \mbox{ for every }\, x\in \soc(A).$$
Being $\soc(A)$ essential, $uu^*$ lies in the center of $A$, $\textrm{Z}(A)$. As $A$ is prime, $\textrm{Z}(A)=\CC \textbf{1},$ that is, $uu^* =\lambda\textbf{1}$,  for some $\lambda\in \RR ^+$.
\end{proof}
\begin{remark} Notice that the same conclusions hold when $A$ is a unital $C^*$-algebra with trivial center and either $A$ is linearly spanned by its projections, or $A$ has real rank zero (that is, the set of all real linear
combinations of orthogonal projections is dense in the set of all hermitian
elements of $A$, \cite{BrownPed91}).
\end{remark}
\section{Maps preserving the diamond partial order}
Let $A$ and $B$ be C$^*$-algebras. Recall that a  linear map $T:A \to B$ is a \emph{Jordan homomorphism} if $T(a^2)=T(a)^2$, for all $a\in A$, or equivalently, $T(ab+ba)=T(a)T(b)+T(b)T(a)$ for every $a,b$ in $A$. A bijective Jordan homomorphism is named \emph{Jordan isomorphism}. Clearly every homomorphism and every anti-homomorphism is a Jordan homomorphism.  A well known result of Herstein, \cite{Her56}, states that every surjective Jordan homomorphism $T:A \to B$ is either an homomorphism or an
anti-homomorphism whenever $B$ is prime.

Recall also that if $T:A \to B$ is a Jordan homomorphism then \begin{equation}\label{triple} T(abc+cba)=T(a)T(b)T(c)+T(c)T(b)T(a),\end{equation} for all $a,b,c\in A$.

The mapping $T$ is called \emph{selfadjoint} if $T(a^*)=T(a)^*$, for every $a\in A$. Selfadjoint Jordan homomorphisms are called \emph{Jordan $^*$-homomorphisms}.
It can be easily checked that every $^*$-homomorphism and every $^*$-anti-homomorphism preserves the diamond partial order. Hence, it is also the case for  every  Jordan $^*$-homomorphism $T:A\to B$ onto a prime $C^*$-algebra.

In the next proposition we show that every Jordan $^*$-homomorphism preserves the diamond partial order in the setting of all regular elements. It can be proved by using \cite[Remark 8]{BMM12} and \cite[Proposition 3.1]{BuMaMo-pr15}. However we include its proof here for the sake of completeness.

\begin{proposition}\label{jordiam} Let $A$ and $B$ be C$^*$-algebras. If $T:A\to B$ is a Jordan $^*$-homomorphism, then
$$ a\ld b\quad \mbox{implies}\quad T(a)\ld T(b), \quad \mbox{for all }\, a,b\in A^\dag.$$
\end{proposition}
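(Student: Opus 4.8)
The plan is to verify directly the three defining conditions of Definition \ref{diamdef} for the pair $T(a),T(b)$: namely $T(a)B\subseteq T(b)B$, $BT(a)\subseteq BT(b)$, and $T(a)T(a)^*T(a)=T(a)T(b)^*T(a)$. The only structural facts I will use are that $T$ is selfadjoint, i.e. $T(x^*)=T(x)^*$, and the \emph{wedge identity} $T(xyx)=T(x)T(y)T(x)$, valid for any Jordan homomorphism; the latter is obtained at once from \eqref{triple} by setting the two outer arguments equal (equivalently, by linearizing $T(x^2)=T(x)^2$).

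The diamond equation is the easy part. Since $a\ld b$ gives $aa^*a=ab^*a$, I would compute
$$T(a)T(b)^*T(a)=T(a)T(b^*)T(a)=T(ab^*a)=T(aa^*a)=T(a)T(a^*)T(a)=T(a)T(a)^*T(a),$$
using selfadjointness in the first and last steps and the wedge identity (with $y=b^*$ and then $y=a^*$) in the middle ones. This already settles the third condition.

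The two range inclusions are where the real difficulty lies, because a Jordan homomorphism does \emph{not} respect ordinary products, so one cannot transport $aA\subseteq bA$ to $T(a)B\subseteq T(b)B$ naively. The idea is to rewrite $a$ in a \emph{wedge-symmetric} form adapted to $b$. Since $a,b\in A^\dag$ are regular and $a\in aA\cap Aa$, the inclusions $aA\subseteq bA$ and $Aa\subseteq Ab$ give $a=bw$ and $a=vb$ for some $v,w$; left-multiplying the first by $bb^\dagger$ and right-multiplying the second by $b^\dagger b$ then yields $bb^\dagger a=a$ and $ab^\dagger b=a$. Combining these produces
$$a=(bb^\dagger)\,a\,(b^\dagger b)=b\,(b^\dagger a b^\dagger)\,b,$$
which is exactly of the form $byb$ with $y=b^\dagger a b^\dagger$. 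Applying the wedge identity I obtain $T(a)=T(b)\,T(b^\dagger a b^\dagger)\,T(b)$, so that $T(a)\in T(b)B\cap BT(b)$, whence $T(a)B\subseteq T(b)B$ and $BT(a)\subseteq BT(b)$.

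Putting the three conditions together gives $T(a)\ld T(b)$. The main obstacle, as flagged above, is the range preservation: the crux is the factorization $a=b(b^\dagger a b^\dagger)b$, which converts a statement about one-sided ideals---which Jordan homomorphisms handle badly---into an application of the quadratic (wedge) identity, which they respect. Everything else is routine.
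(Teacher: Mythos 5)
Your proof is correct, and it takes a genuinely different route from the paper's. The paper argues indirectly: it first shows that a Jordan $^*$-homomorphism strongly preserves Moore--Penrose invertibility ($T(a^\dagger)=T(a)^\dagger$), then that it preserves the minus partial order $\leq^-$ on regular elements, and finally transfers this back through the equivalence $a\ld b \Leftrightarrow a^\dagger\leq^- b^\dagger$ of Proposition \ref{diamprop}~\emph{(3)}. You instead verify the three conditions of Definition \ref{diamdef} head-on: the equality $T(a)T(a)^*T(a)=T(a)T(b)^*T(a)$ follows from selfadjointness together with the wedge identity $T(xyx)=T(x)T(y)T(x)$, and the ideal inclusions follow from your factorization $a=b\,(b^\dagger a b^\dagger)\,b$, extracted from $bb^\dagger a=a=ab^\dagger b$, which recasts the one-sided ideal information in a wedge-symmetric form that $T$ respects; applying $T$ gives $T(a)=T(b)T(b^\dagger ab^\dagger)T(b)$, hence $T(a)B\subseteq T(b)B$ and $BT(a)\subseteq BT(b)$. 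All steps check (note that $a\in aA$ uses regularity of $a$, or unitality of $A$). Your argument is more elementary and self-contained: it avoids the uniqueness argument behind $T(a^\dagger)=T(a)^\dagger$, the characterization of $\leq^-$ via inner inverses from \cite{BuMaMo-pr15}, and the cited equivalence from \cite{LebPaTh13}; moreover, since your factorization works equally well with any inner inverse of $b$ in place of $b^\dagger$, it exposes that only the regularity of $b$ is really needed. What the paper's longer route buys is two byproducts of independent interest, established inside its proof and relevant to the surrounding discussion: that Jordan $^*$-homomorphisms strongly preserve Moore--Penrose inverses and that they preserve the minus partial order on regular elements.
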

\begin{proof}
First notice that every Jordan $^*$-homomorphism $T:A\to B$ between C$^*$-algebras strongly preserves Moore-Penrose invertibility, that is, $T(a^\dagger)=T(a)^\dagger$ for every $a\in A^\wedge$. Indeed if $a\in A^\wedge$ and $b=a^\dagger$, from Equation~(\ref{triple}) it is clear that $T(a)=T(a)T(b)T(a)$ and $T(b)=T(b)T(a)T(b)$, that is, $T(b)$ is a generalized inverse of $T(a)$. By the uniqueness of the Moore-Penrose inverse, it remains to show that $T(b)T(a)$ and $T(a)T(b)$ are selfadjoint. As $a=b^*a^*a=aa^*b^*$, in particular $2a=b^*a^*a+aa^*b^*$, and since $T$ is a Jordan $^*$-homomorphism, it is clear that
$$2T(a)=T(b)^*T(a)^*T(a)+T(a)T(a)^*T(b)^*.$$ Multiplying on the left by $T(a)^*$, we get  that $$T(a)^*T(a)=T(a)^*T(a)T(a)^*T(b)^*,$$ or equivalently $T(a)^*T(a)(T(b)T(a)-T(a)^*T(b)^*)=0$, which implies that $T(a)=T(a)T(a)^*T(b)^*$, and hence $T(b)T(a)=T(b)T(a)T(a)^*T(b)^*$ is selfadjoint.

Now, we claim that $T:A\to B$ preserves the minus partial order. Let $a,b\in A^\wedge$. We know that $a\leq^-b$ if and only if there exists a generalized inverse $b^-$ of $b$ such that $a=ab^-a=ab^-b=bb^-a$. From Equation (\ref{triple}), as $a=ab^-a$ and $2a=ab^-b+bb^-a$, we have $$T(a)=T(a)T(b)^-T(a)\quad \mbox{ and}\quad 2T(a)=T(a)T(b)^-T(b)+T(b)T(b)^-T(a).$$  Multiplying  the last identity by $T(b)^-T(a)$ on the right, and havind in mind that  $T(b)^-$ is a generalized inverse of $T(b)$, we get \begin{eqnarray*}2T(a)&=& T(a)T(b)^-T(b)T(b)^-T(a)+T(b)T(b)^-T(a)T(b)^-T(a)\\ &=& T(a)+T(b)T(b)^-T(a).
\end{eqnarray*} Consequently, $T(a)=T(b)T(b)^-T(a)$. Similarly, it can be proved that $T(a)=T(a)T(b)^-T(b)$, which yields to $T(a)\leq^{-} T(b)$.

We conclude the proof by applying Proposition \ref{diamprop} \emph{(3)}.
\end{proof}
In this section we wonder whether Jordan $^*$-homomorphisms arise from linear maps preserving the diamond partial order.

The study of linear maps between C$^*$-algebras preserving the star partial order (and its generalization (R2)), was connected  in \cite{BuMaPa15} with that of orthogonality preserves (\cite{Orth08}).
Recall that two elements $a,b$ in a $C^*$-algebra  $A$ are called \emph{orthogonal} (denoted by $a\perp b$) if $ab^*=b^*a=0$. Given $a,b$ in a C$^*$-algebra $A$, it is straightforward that $a\leq_{*}  (a+b)$ if and only if $a\perp b$.
From Proposition \ref{diamprop} \emph{(2)} it follows that for a regular element $a\in A$, if $a\perp b, $ then $ a\ld (a+b)$. The following example shows that the reciprocal does not hold. Hence we cannot expect to apply the same orthogonality arguments in order to describe linear maps between $C^*$-algebras preserving the diamond partial order.
\begin{example} Let $A= M_2(\mathbb{C})$ and $$a=\left( \begin{array}{ccc}
1 & 0 \\
0 & 0 \end{array} \right),\qquad u=\left( \begin{array}{ccc}
0 & 1/\sqrt{2} \\
0 & 1/\sqrt{2} \end{array} \right).$$
It is clear that $a$ is a projection and $u$ is a partial isometry in $A$. It can be checked that
$$a u^* a =0,$$
$$a A = \left\{ \left( \begin{array}{ccc}
x & y \\
0 & 0 \end{array} \right): x,y\in \mathbb{C} \right\} \subseteq (a+u) A =  \left\{ \left( \begin{array}{ccc}
x+z/\sqrt{2}& y+t/\sqrt{2} \\
z/\sqrt{2} & t/\sqrt{2} \end{array} \right) : x,y,z,t\in \mathbb{C}\right \}.$$  Similarly,
$$A a= \left\{ \left( \begin{array}{ccc}
x & 0 \\
z & 0 \end{array} \right): x,z\in \mathbb{C}\}\subseteq  A (a+u)=  \{ \left( \begin{array}{ccc}
x& (x+y)/\sqrt{2} \\
z & (z+t)/\sqrt{2} \end{array} \right) : x,y,z,t\in \mathbb{C}\right\}.$$
This shows that $a\ld(a+u)$. However $a$ and $u$ are not orthogonal since $u^*a\neq 0$.
\end{example}

Our first main result partially uses similar arguments to those of \cite[Theorem 3.2, Theorem 3.6]{BuMaMo-pr15}.
\begin{theorem}\label{main1} Let $A$ and $B$ be unital $C^*$-algebras with essential socle. Assume that $B$ is prime. Let $T:A\to B$ be a surjective linear map and $h=T(\textbf{1})$.
The following conditions are equivalent:
\begin{enumerate}
\item $a\ld b \Leftrightarrow T(a)\ld T(b),$  for every $a,b\in A$,
\item $hh^*=h^*h=\lambda\textbf{1},$ with $\lambda \in \RR^+$, and $T=hS$, where $S:A\to B$ is either a $^*$-isomorphism or a $^*$-anti-isomorphism.
\end{enumerate}
\end{theorem}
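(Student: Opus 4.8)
The plan is to prove the two implications separately, disposing of the easy direction first.

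\emph{Direction (2) $\Rightarrow$ (1).} A $^*$-isomorphism or $^*$-anti-isomorphism $S$ is in particular a Jordan $^*$-homomorphism, and since it is bijective with inverse again of the same type, it transports each of the three defining conditions of $\ld$ back and forth: the inclusions $aA\subseteq bA$ and $Aa\subseteq Ab$ (note that an anti-isomorphism merely interchanges these two, which are imposed symmetrically, using $S(aA)=BS(a)$), and the cubic identity, since $S(aa^*a)=S(a)S(a)^*S(a)$ because $aa^*a$ is a palindrome. Hence $a\ld b\Leftrightarrow S(a)\ld S(b)$ for all $a,b\in A$. Writing $h=\lambda^{1/2}w$ with $w$ unitary (which is exactly the content of $hh^*=h^*h=\lambda\textbf{1}$, $\lambda>0$), the remark preceding Proposition \ref{iso-diam} gives $x\ld y\Leftrightarrow wx\ld wy$, and scaling by $\lambda^{1/2}>0$ preserves $\ld$ in both directions, so left multiplication by $h$ is an order isomorphism for $\ld$. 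Composing, $T=hS$ preserves $\ld$ in both directions.

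\emph{Direction (1) $\Rightarrow$ (2): bijectivity and rank-one preservation.} Since $A$ is unital, $a\ld 0$ forces $aA=0$, i.e. $a=0$, so $0$ is the least element of $(A,\ld)$. Applying the equivalence in (1) with $b=0$ and using $T(0)=0$ gives $T(a)=0\Leftrightarrow a\ld 0\Leftrightarrow a=0$, so $T$ is injective and hence bijective. Therefore $T$ is an order isomorphism of $(A,\ld)$ onto $(B,\ld)$ fixing $0$, and so it carries minimal nonzero elements to minimal nonzero elements in both directions; by Proposition \ref{minimals} this means $T(F_1(A))=F_1(B)$ and $T^{-1}(F_1(B))=F_1(A)$.

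\emph{Structural core.} A linear bijection preserving rank-one elements in both directions preserves, for any finite family of rank-ones, the property that every nonzero linear combination is again rank-one; by the standard dichotomy for such rank-one subspaces of the socle, each is contained either in a fixed minimal left ideal or in a fixed minimal right ideal. This forces $T$ either to preserve or to interchange the families of minimal one-sided ideals, which is precisely the mechanism separating the homomorphism from the anti-homomorphism case, and (as in the techniques of \cite{BuMaMo-pr15}) it yields that $T$ sends the minimal right ideal of each $w\in F_1(A)$ into the minimal right ideal of $T(w)$ (and similarly on the left, up to the interchange). Feeding this into the invertibility criterion of Proposition \ref{invprdiamond} applied in the prime algebra $B$ to $h=T(\textbf{1})$: for $u\in F_1(B)$ put $w=T^{-1}(u)$, so that $ww^\dagger\in wA$ and $w^\dagger w\in Aw$ satisfy $ww^\dagger\ld\textbf{1}$ and $w^\dagger w\ld\textbf{1}$; their nonzero images lie in $uB$ and $Bu$ and are $\ld h$, whence $h\in B^{-1}$. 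With $h$ invertible, the equivalence (1) can then be run through Proposition \ref{iso-diamond-prime}, which is exactly tailored to certify that $hh^*=h^*h=\lambda\textbf{1}$ with $\lambda\in\RR^+$; equivalently $w:=\lambda^{-1/2}h$ is unitary.

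\emph{Reduction and conclusion.} Set $S:=\lambda^{-1}h^*T$, so that $T=hS$ and $S(\textbf{1})=\textbf{1}$. By Proposition \ref{iso-diam} in both directions, $S$ again preserves $\ld$ in both directions and is now unital. Since $p\in\textrm{Proj}(A)$ if and only if $p\ld\textbf{1}$ and $\textbf{1}-p\ld\textbf{1}$ (Proposition \ref{proj}), and $S$ is linear and fixes $\textbf{1}$, the map $S$ carries $\textrm{Proj}(A)$ onto $\textrm{Proj}(B)$ in both directions. Combined with the rank-one analysis above, this identifies $S$ as a Jordan $^*$-homomorphism; since $B$ is prime, Herstein's theorem then forces $S$ to be either a $^*$-isomorphism or a $^*$-anti-isomorphism, which is (2). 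The main obstacle is the structural core: extracting multiplicative (Jordan) and $^*$-information from a relation that, as the Example preceding the theorem shows, does \emph{not} reduce to orthogonality. The two delicate points I expect to absorb most of the work are (i) proving $h$ is a scalar multiple of a unitary, via the interplay of Propositions \ref{invprdiamond} and \ref{iso-diamond-prime}, and (ii) organizing the rank-one preservation into the clean ``preserves versus interchanges one-sided ideals'' dichotomy so that, after the reduction to the unital map $S$, Proposition \ref{proj} applies.
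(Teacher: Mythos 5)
Your direction (2) $\Rightarrow$ (1) and the opening of (1) $\Rightarrow$ (2) — injectivity, $T(F_1(A))=F_1(B)$ via Proposition \ref{minimals}, the dichotomy $T(uA),T(Au)\in\{T(u)B,BT(u)\}$, and the shortcut that gets $h\in B^{-1}$ by feeding the images of $ww^\dagger$ and $w^\dagger w$ into Proposition \ref{invprdiamond} — are sound and essentially follow the paper. But the two steps you yourself flag as delicate are genuinely broken, and both for the same reason: you try to use Proposition \ref{iso-diamond-prime} and deduce the Jordan $^*$-structure \emph{before} having any multiplicative information about $T$. Proposition \ref{iso-diamond-prime} has as hypothesis the equivalence $a\ld b \Leftrightarrow au\ld bu$ for all $a,b\in B$, i.e.\ compatibility of the order with multiplication \emph{inside} $B$; hypothesis (1) of the theorem only relates the order in $A$ to the order in $B$ through $T$, and with $h$ merely known to be invertible there is no way to ``run the equivalence (1) through'' that proposition to obtain $hh^*=h^*h=\lambda\uno$. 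This is precisely why the paper reverses the order of steps: it first proves that $T$ preserves invertibility for \emph{all} $a\in A^{-1}$ (your argument only gives this for $a=\uno$), then applies the Bre\v{s}ar--Fo\v{s}ner--\v{S}emrl theorem \cite{BrFoSe03} to the unital map $S=h^{-1}T$ to get a Jordan isomorphism, and Herstein's theorem (since $B$ is prime) to make $S$ multiplicative or anti-multiplicative. Only then does the identity $T(au)=T(a)h^{-1}T(u)$, for $u$ unitary in $A$ and using surjectivity of $T$, convert hypothesis (1) into the right-multiplication compatibility in $B$ that Proposition \ref{iso-diamond-prime} needs; and the conclusion that $h$ itself is a scalar multiple of a unitary is the \emph{last} line of the paper's proof, not an intermediate step.

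The second gap is the finish: even granting that $h$ is a multiple of a unitary, the inference ``$S$ carries projections to projections (in both directions), hence $S$ is a Jordan $^*$-homomorphism'' is unjustified at this level of generality. That tool (quoted in the paper just before Theorem \ref{d-rro}) requires $S$ to be \emph{bounded} and $A$ to have real rank zero, or at least to be linearly spanned by its projections. Here $A$ is only a unital $C^*$-algebra with essential socle, and $S$ is not known to be continuous; for instance $A=K(H)+\CC\uno$ has essential socle, but a finite linear combination of its projections is always a finite-rank operator plus a scalar, so a compact operator of infinite rank is not such a combination. The paper avoids this entirely: the Jordan structure comes from invertibility preservation, and selfadjointness comes not from projections but from the chain ``$S(u)$ is normal for every unitary $u$ (via Proposition \ref{iso-diamond-prime}, available only after multiplicativity), hence $\|S(u)\|=r(S(u))=r(u)=1$, hence $S$ is selfadjoint by the Russo--Dye theorem \cite{RuDye}.'' To repair your proof you must reinstate the full invertibility-preservation step and follow this route; the projection argument cannot replace it.
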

\begin{proof}

We only need to prove that \emph{(1)}$\Rightarrow$\emph{(2)}, since the converse is straightforward.
Suppose then that
\[a\ld b \Leftrightarrow T(a)\ld T(b), \quad \text{ for every }a,b\in A.\]
Notice that $T$ is injective: if $T(x)=0$, then $T(x)\ld T(0)$, which by assumption, gives that $x\ld 0$, and finally $x=0$.

We claim that $T(F_1(A))=F_1(B)$. Indeed, pick $u\in F_1(A)$. From Proposition \ref{minimals}  there exists $T(v)\in F_1(B)$ such that $T(v)\ld T(u)$.  By hypothesis we have $v\ld u$. As $u\in F_1(A)$, and $v\neq 0$, Proposition \ref{minimals} implies that $v=u$. That is, $T(v)=T(u)$, which shows that $T(u)\in F_1(B)$. The same arguments applied to $T^{-1}$ gives $T(F_1(A))=F_1(B)$.
It can be shown that the maximal linear subspaces of $\soc(A)$ consisting of elements of rank at most one are either of the form $uA$ or $Au$, for some $u\in F_1(A)$ (see \cite[Lemma 2.18]{BuMaMo-pr15}). Therefore, $T(uA), T(Au)\in \{T(u)B,BT(u)\}$, for every $u\in F_1(A)$.

Next we prove that $T$ preserves invertibility.  For this purpose, take $a\in A^{-1}$. Given $T(u)\in F_1(B)$, by Proposition \ref{invprdiamond}, there exist non zero elements $x_0\in uA$ and $y_0\in Au$, such that $x_0,y_0 \ld a$.  If $T^{-1}(T(u)B)=uA$, take $x=x_0$. Otherwise, take $x=y_0$. Then $T(x)\in T(u)B$ and $T(x)\ld T(a)$.
Similarly, we find $T(y)\in BT(u)$ with $T(y)\ld T(a)$. By Proposition \ref{invprdiamond}, $T(a)\in B^{-1}$.  In particular $h=T(\textbf{1})\in B^{-1}$.

Let us define the linear mapping $S:A\to B$ as $S(x)=h^{-1}T(x)$ for every $x\in A$. It is clear that $S$ is unital, bijective and preserves invertibility. By \cite[Theorem 1.1]{BrFoSe03},  $S$ is a Jordan isomorphism. Since $B$ is prime, we known that $S$ is either an isomorphism or an anti-isomorphism. We may  assume, without loss of generality, that $S$ is an isomorphism. Then
$$T(xy)=T(x)h^{-1}T(y),\quad \mbox{for all}\, x,y \in A.$$
Let $u$ be a unitary element in $A$. It is clear that
$au\ld bu$ if and only if $a\ld b$.
By hypothesis,
$$T(a)\ld T(b)\Leftrightarrow T(au)\ld T(bu)\Leftrightarrow T(a)h^{-1}T(u)\ld T(b)h^{-1}T(u).$$
Taking into account Proposition \ref{iso-diamond-prime}, we conclude that $S(u)S(u)^* =\lambda\textbf{1}$,  with $\lambda \in \RR ^+$.
As $S(u)\in B^{-1}$, it follows that $S(u)S(u)^* =S(u)^*S(u)=\lambda\textbf{1}$ . In particular, $S(u)$ is normal, for every unitary element $u\in A$.
Consequently, as $S$ is a unital Jordan homomorphism, it follows that $$||S(u)||=r(S(u))=r(u)=1,\quad \mbox{for every unitary element } \, u\in A.$$
This shows that $S$ is selfadjoint (see\cite[Corollary 2]{RuDye}).

Finally, $T(x)=hS(x)$, for every $x\in A$, where $S$ is either a $^*$-isomorphism or a $^*$-anti-isomorphism. Since $T$ and $S$ both preserve the diamond partial order, we have
$$T(a)\ld T(b)\Leftrightarrow a\ld b \Leftrightarrow S(a)\ld S(b) \Leftrightarrow h^{-1}T(a)\ld h^{-1}T(b).$$
By Proposition \ref{iso-diamond-prime}, $h^{-1}$ is a scalar multiple of an isometry and, hence, $h$ is a scalar multiple of a unitary element.
\end{proof}

The next corollary can be obtained directly from Theorem \ref{main1} and the well known structure of surjective linear isometries of $B(H)$.
\begin{corollary} Let $H$ be a complex Hilbert space. If $\Phi:B(H)\to B(H)$ is a surjective linear map that preserves the diamond partial order in both directions, then there are unitary operators $U,V$ on $H$, and $\lambda\in \RR ^{+}$, such that $\Phi$ is either of the form
$$\Phi(A)=\lambda UAV\qquad \mbox{for all }\, A\in B(H),$$
or of the form
$$\Phi(A)=\lambda UA^{tr}V\qquad \mbox{for all }\, A\in B(H).$$
\end{corollary}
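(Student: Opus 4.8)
The plan is to deduce the corollary directly from Theorem \ref{main1}, so the first task is to verify that $B(H)$ meets its hypotheses. For any complex Hilbert space $H$, the algebra $B(H)$ is a unital, prime $C^*$-algebra, and its socle is the ideal $F(H)$ of finite-rank operators, which is essential: if $aF(H)=0$, then testing against the rank-one operators $\xi\otimes\eta$ forces $a\xi=0$ for every $\xi\in H$, whence $a=0$. Thus, taking $A=B=B(H)$, all the standing assumptions of Theorem \ref{main1} (unital $C^*$-algebra with essential socle, with the target prime) are in force.

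Applying Theorem \ref{main1} to the surjective linear map $\Phi$, which by hypothesis preserves ``$\ld$'' in both directions, I obtain that $h:=\Phi(\textbf{1})$ satisfies $hh^*=h^*h=\lambda\textbf{1}$ for some $\lambda\in\RR^+$, and that $\Phi=hS$, where $S:B(H)\to B(H)$ is either a $^*$-isomorphism or a $^*$-anti-isomorphism. The relation $hh^*=h^*h=\lambda\textbf{1}$ says precisely that $W:=\lambda^{-1/2}h$ is unitary, so that $h=\sqrt{\lambda}\,W$.

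Next I normalise. Set $\Psi:=\lambda^{-1/2}\Phi=WS$. Since $S$, being a Jordan $^*$-isomorphism between $C^*$-algebras, is a surjective linear isometry, and since left multiplication by the unitary $W$ is an isometry, $\Psi$ is a surjective linear isometry of $B(H)$. Invoking the well-known description of such isometries, there are unitaries $U,V$ on $H$ with either $\Psi(A)=UAV$ for all $A\in B(H)$, or $\Psi(A)=UA^{tr}V$ for all $A\in B(H)$, the two cases corresponding to $S$ being a $^*$-isomorphism or a $^*$-anti-isomorphism (the transpose being the spatial model of a $^*$-anti-automorphism of $B(H)$). Multiplying back by $\sqrt{\lambda}$ and relabelling the positive scalar $\sqrt{\lambda}$ as $\lambda\in\RR^+$ yields $\Phi(A)=\lambda UAV$ or $\Phi(A)=\lambda UA^{tr}V$, as claimed.

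There is no genuine obstacle here, since all the substance is packaged inside Theorem \ref{main1}. The only points requiring care are the (routine) checks that $B(H)$ is prime with essential socle, so that the theorem applies verbatim, and the final bookkeeping: recognising that $\lambda^{-1/2}\Phi$ is an \emph{honest} surjective linear isometry, and then matching the $^*$-isomorphism versus $^*$-anti-isomorphism dichotomy of $S$ to the $UAV$ versus $UA^{tr}V$ forms through the spatial implementation of $^*$-(anti-)automorphisms of $B(H)$.
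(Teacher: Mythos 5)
Your proposal is correct and follows exactly the route the paper intends: the paper derives the corollary ``directly from Theorem \ref{main1} and the well known structure of surjective linear isometries of $B(H)$,'' which is precisely your argument. You merely fill in the routine details the paper leaves implicit (that $B(H)$ is prime with essential socle, that $\lambda^{-1/2}\Phi$ is a surjective linear isometry, and the matching of the isomorphism/anti-isomorphism dichotomy with the $UAV$ versus $UA^{tr}V$ forms), all of which are accurate.
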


Recall that a $C^*$-algebra $A$ has \emph{real rank zero} if the set of all real linear
combinations of orthogonal projections is dense in the set of all hermitian
elements of $A$ (\cite{BrownPed91}). Every von
Neumann algebra and in particular the algebra  $B(H)$ of all bounded linear
operators on a complex Hilbert space $H$, has real rank zero. Other examples
of this kind of algebra include Bunce-Deddens algebras, Cuntz algebras,
AF-algebras and irrational rotation algebras.

The following observation has become a standard tool in the study of Jordan $^*$-homomorphisms: Let $A$ be a real rank zero C*-algebra, $B$ be a $C^*$-algebra and $T: A\to B$ be a bounded linear mapping sending projections to projections. Then $T$ is  *-Jordan homomorphism.

In the next theorem we consider linear maps preserving the diamond partial order on a real rank zero $C^*$-algebra under few additional conditions involving the image of the identity.

\begin{theorem}\label{d-rro} Let $A$ and $B$ be unital $C^*$-algebras. Assume that $A$ has real rank zero. Let $T:A\to B$ be a bounded linear map satisfying that
$$a\ld b\quad\mbox{ implies}\quad T(a)\ld T(b),\quad\mbox{for all }a,b\in A^\wedge.$$
The following assertions hold.
\begin{enumerate}
\item If $T(\textbf{1})\in \textrm{Proj}(B)$ then $T$ is a Jordan $^*$-homomorphism.
\item If $T(A)\cap B^{-1}$ and $T(\textbf{1})$ is a partial isometry then $T$ is a Jordan $^*$-homomorphism multiplied by a unitary element.
\end{enumerate}
\end{theorem}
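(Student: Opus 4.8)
The plan is to reduce both assertions to the standard observation recalled just before the statement: a bounded linear map from a real rank zero $C^*$-algebra into a $C^*$-algebra that sends projections to projections is automatically a Jordan $^*$-homomorphism. Thus the whole game is to produce a map fitting the hypotheses of that observation.

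For assertion \emph{(1)}, I would first show that $T$ carries projections to projections. Put $h=T(\textbf{1})$, which is now assumed to be a projection. Given any projection $P\in A$, Proposition \ref{proj} yields $P\ld\textbf{1}$ and $\textbf{1}-P\ld\textbf{1}$; since projections and $\textbf{1}$ are regular, the preservation hypothesis gives $T(P)\ld h$ and, by linearity, $h-T(P)=T(\textbf{1}-P)\ld h$. As $h\in\textrm{Proj}(B)$, this is exactly condition \emph{(3)} of Proposition \ref{proj} (applied inside $B$, taking ``$q$'' $=h$ and ``$p$'' $=T(P)$), so $T(P)\in\textrm{Proj}(B)$. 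The standard observation then applies and $T$ is a Jordan $^*$-homomorphism.

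For assertion \emph{(2)}, I would pre-compose with $h^{*}$ to return to the situation of \emph{(1)}. Write $h=T(\textbf{1})$ and set $p=h^{*}h$, $q=hh^{*}$, both projections because $h$ is a partial isometry. The reduction works once $h$ is known to be unitary: then $S:=h^{*}T$ is bounded and linear, satisfies $S(\textbf{1})=h^{*}h=\textbf{1}\in\textrm{Proj}(B)$, and still preserves the diamond order on $A^{\wedge}$ because left multiplication by the unitary $h^{*}$ preserves ``$\ld$'' (Proposition \ref{iso-diam}\emph{(1)} with $\lambda=1$, since $(h^{*})^{*}h^{*}=hh^{*}=\textbf{1}$). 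Assertion \emph{(1)} then shows $S$ is a Jordan $^*$-homomorphism, whence $T=hS$ is such a map multiplied by the unitary $h$.

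The crux is therefore to prove that $h$ is unitary, and this is where I expect the main obstacle to lie. My approach is to confine the range of $T$ to the corner $qBp$. For a projection $P\in A$ one has $P\ld\textbf{1}$, hence $T(P)\ld h$; unravelling $T(P)B\subseteq hB=qB$ and $BT(P)\subseteq Bh=Bp$ gives $qT(P)=T(P)=T(P)p$, that is $T(P)\in qBp$. Since $A$ has real rank zero, every hermitian element is a norm limit of real linear combinations of projections, and as $T$ is bounded and $qBp$ is norm closed, linearity forces $T(A)\subseteq qBp$. Finally, choosing an invertible $b_{0}\in T(A)\cap B^{-1}$ and using $b_{0}=qb_{0}p$ forces $q=\textbf{1}$ and $p=\textbf{1}$, because an invertible element cannot be annihilated by $\textbf{1}-q$ on the left nor by $\textbf{1}-p$ on the right. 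Hence $h$ is unitary, which closes the argument.
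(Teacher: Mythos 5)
Your proposal is correct and follows essentially the same route as the paper: part \emph{(1)} via Proposition \ref{proj} and the standard real-rank-zero observation, and part \emph{(2)} by confining $T(A)$ to the corner determined by $T(\textbf{1})$ (your $qBp$ is exactly the paper's $T(\textbf{1})B\cap BT(\textbf{1})$, using boundedness and density of spans of projections), using the invertible element in the range to force $T(\textbf{1})$ to be unitary, and then reducing to part \emph{(1)} through $S=T(\textbf{1})^{*}T$. The only cosmetic difference is that you deduce $h^{*}h=hh^{*}=\textbf{1}$ directly from $b_{0}=qb_{0}p$, whereas the paper first concludes $T(\textbf{1})\in B^{-1}$ and then invokes the partial isometry property.
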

\begin{proof}

Notice that  $p\ld \textbf{1}$ and $ \textbf{1}-p\ld  \textbf{1}$, for every $p\in  \textrm{Proj}(A)$. Therefore,
\begin{equation}\label{pro}
T(p)\ld T(\textbf{1})\quad\text{ and } T(\textbf{1})-T(p)\ld  T(\textbf{1}), \quad \text{for every }\, p\in  \textrm{Proj}(A).\end{equation}

In order to prove \emph{(1)} assume that $T(\textbf{1})\in \textrm{Proj}(B)$. Proposition \ref{proj} and (\ref{pro}) allow us to conclude that $T(p)\in \textrm{Proj}(B)$, for every $p\in  \textrm{Proj}(A)$. Therefore $T$ is a Jordan $^*$-homomorphism.

Now assume that $T(A)\cap B^{-1}$ and that $T(\textbf{1})$ is a partial isometry. Since $T(p)\ld T(\textbf{1})$, in particular, $T(p)\in T(\textbf{1})B\cap B T(\textbf{1})$  for every $p\in  \textrm{Proj}(A)$. Moreover, $T(\textbf{1})B$ and $B T(\textbf{1})$ are closed in view of the regularity of $T(\textbf{1})$ .  As $T$ is linear and bounded, and every
self-adjoint element in $A$ can be approximated by linear combinations of mutually orthogonal projections, we conclude that  $T(A)\subseteq T(\textbf{1})B\cap B T(\textbf{1})$. This fact together with $T(A)\cap B^{-1}$, imply that $ T(\textbf{1})\in B^{-1}$, and therefore, $ T(\textbf{1})$ is unitary. Let $S:A\to B$ be the linear mapping given by $S(x)= T(\textbf{1})^*T(x)$, for all $x\in A$. Hence $T(x)= T(\textbf{1})S(x)$, for all $x\in A$. Taking into account that $T$ preserves the diamond partial order and $ T(\textbf{1})$ is unitary, it is clear that $S$ is a  unital, bounded, linear mapping preserving the diamond partial order. As consequence, $S$ preserves projections and hence it is a Jordan $^*$-homomorphism.
\end{proof}

\end{document}